\newtheorem{mainthm}{Theorem}
\begin{document}
\title{The domino problem for hyperbolic groups}
\author{Laurent Bartholdi}
\address{Fachrichtung Mathematik, Universität des Saarlandes}
\email{laurent.bartholdi@gmail.com}
\date{May 10th, 2023}
\thanks{The author gratefully acknowledges partial support from the ERC AdG grant 101097307}
\begin{abstract}
  We prove, for every non-virtually free hyperbolic group $G$, that there is no algorithm that, given a finite collection of dominoes, determines whether the Cayley graph of $G$ may be edge-covered by these dominoes so that colours match at vertices. This answers a conjecture by Aubrun, Barbieri and Moutot and goes towards settling a long-standing conjecture of Ballier and Stein.
\end{abstract}
\maketitle

\section{Introduction}
Consider a finitely generated group $G=\langle S\rangle$. The \emph{domino problem} asks for an algorithm that, on input a finite set $A$ (the ``colours'') and a subset $\Theta\subseteq A\times S\times A$ (the ``dominoes''), determines whether every element of $G$ may be coloured by $A$ in such a manner that $(\text{colour}(g),s,\text{colour}(g s))\in \Theta$ for all $g\in G,s\in S$; namely, whether the Cayley graph of $G$ may be vertex-coloured by $A$ in such a manner that each edge carries an allowed domino.

If $G$ admits a finite-index free subgroup, then the domino problem on $G$ is decidable~\cite{muller-s:context-free-2}. It was conjectured by Ballier and Stein that this characterizes groups with decidable domino problem. Recall that a group $G$ is called \emph{virtually $\mathscr P$} if the property $\mathscr P$ holds for a finite-index subgroup of $G$:
\begin{conj}[Ballier and Stein~\cite{ballier-stein:pgdomino}]
  A finitely generated group has decidable domino problem if and only
  if it is virtually free.
\end{conj}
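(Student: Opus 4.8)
The plan is to handle the two implications separately. The forward direction --- a virtually free group has decidable domino problem --- is the theorem of Muller and Schupp recalled above~\cite{muller-s:context-free-2}, since the Cayley graph is then context-free and emptiness of a subshift of finite type over such a graph is decidable. All the difficulty lies in the converse: \emph{every finitely generated group that is not virtually free has undecidable domino problem}. I would prove this in two stages: a structural reduction to one-ended groups, and then a geometric construction of undecidable tiling families on one-ended groups, of which this paper supplies the hyperbolic case.

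\emph{Stage 1: reduction to one-ended groups.} I would exploit two stability features of the domino problem. It is a commensurability invariant --- decidable on $G$ iff decidable on a finite-index subgroup --- and, more importantly, it is assembled along splittings over finite subgroups: if $G$ is the fundamental group of a finite graph of groups with finite edge groups, then the domino problem on $G$ is decidable iff it is decidable on each vertex group (this I would establish by filling in the vertex cosets, strengthening the known behaviour under free products). Combining these with Stallings' ends theorem and Dunwoody's accessibility theorem, any finitely presented non-virtually-free group decomposes fully over finite subgroups into a graph of groups in which some vertex group $V$ is one-ended --- were all vertex groups finite, $G$ would be virtually free --- so it suffices to prove the domino problem undecidable on $V$. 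The finitely generated non-accessible groups of Dunwoody escape this reduction and would need a dedicated argument.

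\emph{Stage 2: one-ended hyperbolic groups.} I would first treat $V$ one-ended and word-hyperbolic, the case resolved in the body of this paper. My approach would exploit the rigid self-similar combinatorics of such a Cayley graph --- finitely many cone types (Cannon), a regular geodesic language, a connected Gromov boundary --- to force, through finitely many local colouring rules, a self-refining hierarchy of coarse ``grids'' that carry a simulation of a universal Turing machine in the manner of Berger and Robinson; one thereby obtains a computable family of subshifts of finite type on $V$ whose emptiness encodes the halting problem. The construction must bypass any embedded $\mathbb{Z}^{2}$ (hyperbolic groups contain none): the grid has to be an intrinsically coarse, exponentially distorted object. Together with Stage 1 this already settles the conjecture for all hyperbolic groups.

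\emph{Stage 3: arbitrary one-ended groups --- the main obstacle.} To finish the conjecture one must run an analogous construction on an arbitrary one-ended finitely generated $V$. The route I would pursue is to distil from one-endedness a coarse two-dimensional skeleton --- a self-similar system of ``quasi-discs'' glued along ``thick bigons'' and recognisable by an SFT through boundedly many local constraints --- and to implement the Berger--Robinson encoding on that skeleton. I expect this to be the decisive obstacle. One-endedness forbids thin separators but supplies, a priori, no uniform self-similar or bounded-complexity planar pattern; the structural inputs available (finiteness of cone types, JSJ decompositions, relative hyperbolicity, acylindrical splittings) yield such a skeleton only for geometrically tame $V$. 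Wildly behaved one-ended groups --- lacunary-hyperbolic groups, one-ended Tarski monsters, groups with no canonical splitting and no bound on local complexity --- are precisely where nothing is known, not even the weaker existence of a weakly aperiodic SFT, and they appear to demand a genuinely new mechanism for manufacturing subshifts of finite type directly from the coarse topology of the group. A realistic intermediate target extending the present paper is the class of one-ended groups carrying a nontrivial JSJ decomposition over slender subgroups, where one can induct along the decomposition; the wild one-ended groups remain the true frontier of the Ballier--Stein conjecture.
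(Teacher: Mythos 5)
There is nothing to compare here in the usual sense: the statement you are proving is the Ballier--Stein \emph{conjecture}, which the paper explicitly records as still open; the paper itself proves only the special case of word hyperbolic groups (Theorem~\ref{thm:main}), via the reductions of \S\ref{ss:reductions} and the simulation criterion of Theorem~\ref{thm:criterion}. Your text is likewise not a proof but a research program, and you say so yourself: Stage~3 (arbitrary one-ended, non-hyperbolic, finitely generated groups) is presented as ``the decisive obstacle'' with no construction supplied, and you also flag that inaccessible finitely generated groups escape your Stage~1 reduction (Dunwoody's accessibility, which the paper uses, requires finite presentability). Since the conclusion of the conjecture is precisely what is left unproved at Stage~3, the proposal has a genuine and acknowledged gap and cannot be accepted as a proof of the statement.

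Two smaller points of comparison with what the paper actually does for the hyperbolic case. First, your Stage~1 matches the paper's reduction in \S\ref{ss:reductions}, except that the paper only needs the easy direction (undecidability passes from a finitely generated subgroup --- here a one-ended, quasi-convex vertex group --- up to $G$), not the full ``decidable on $G$ iff decidable on each vertex group'' combination theorem you posit, which you would otherwise have to prove. Second, your Stage~2 proposes a Berger--Robinson-style hierarchical grid carrying a universal Turing machine; the paper deliberately avoids this route, since hyperbolic groups contain no $\Z^2$ and no obvious aperiodic hierarchy, and instead follows Kari's method: it simulates a $2$-regular tower of infinite, amenable, connected horosphere graphs (via the Cohen--Goodman-Strauss--Rieck subshift) and reduces from the immortality problem for rational piecewise affine maps, using $\partial$Beatty sequences and a F\o lner/averaging argument in place of any grid. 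If you want to salvage Stage~2 you should either adopt that mechanism or explain concretely how a Robinson-type hierarchy is to be enforced by local rules in a hyperbolic Cayley graph; as written, that step is asserted rather than constructed.
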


While this conjecture is at present (2023) still open, there has been substantial progress towards its resolution:
\begin{enumerate}
\item Among finitely presented groups, the domino problem is ``geometric'': if $G,H$ are quasi-isometric then $G$ has decidable domino problem if and only if $H$ does.
\item By a fundamental result of Berger~\cite{berger:undecidability}, the domino problem is undecidable on $\Z^2$, and therefore on every group that contains $\Z^2$ as a subgroup. This covers all non-virtually-cyclic virtually polycyclic groups~\cite{jeandel:polycyclic}.
\item Recall next that an action by permutations of a finitely generated group $H$ on a group $G$ is \emph{translation-like} if it is free and acts by piecewise right-translations~\cite{whyte:amenability}. If furthermore $H$ is finitely presented and has undecidable domino problem, then $G$ has undecidable domino problem~\cite{jeandel:translation}*{Theorem~3}; thus every group admitting a translation-like action of $\Z^2$ has undecidable domino problem.

  Every infinite, finitely generated group admits a translation-like $\Z$-action~\cite{seward:burnside}, so every group containing a subgroup of the form $H_1\times H_2$ with $H_1,H_2$ infinite and finitely generated has undecidable domino problem. This applies in particular to ``branched groups'' such as the first Grigorchuk group.
\item The $1$-skeleton of certain regular tilings of the hyperbolic plane have undecidable domino problem; from this it follows that Baumslag-Solitar groups~\cite{aubrun-kari:bs} and fundamental groups of closed surfaces~\cite{aubrun-barbieri-moutot:dominosurface} have undecidable domino problem.
\item Groups in which a graph with undecidable domino problem can be ``simulated'' have themselves undecidable domino problem. Loosely speaking, ``simulating'' means that the vertices and edges of the simulated graph are produced by an auxiliary domino problem; see~\S\ref{ss:simulations}. The ``lamplighter group'' $(\Z/2\Z)^{(\Z)}\rtimes\Z$ has undecidable domino problem~\cite{bartholdi-salo:shifts}, since it can simulate $\Z^2$.
\end{enumerate}

\emph{Word hyperbolic groups} were introduced by Gromov in~\cite{gromov:hyperbolic} as a common generalization of small cancellation groups and fundamental groups of negatively curved manifolds. They include fundamental groups of compact negatively curved Riemannian manifolds (such as surfaces of genus $\ge2$), small cancellation groups, as well as an overwhelming proportion of finitely presented groups~\cite{gromov:hyperbolic}*{\S0.2A}.
\begin{mainthm}\label{thm:main}
  Let $G$ be a word hyperbolic group. The the domino problem on $G$ is decidable if and only if $G$ is virtually free.
\end{mainthm}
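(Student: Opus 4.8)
The ``if'' direction is the theorem of Muller and Schupp quoted above, so I would spend all the effort on the converse: a hyperbolic group $G$ that is not virtually free has undecidable domino problem. \emph{First I would reduce to the one-ended case.} Since $G$ is finitely presented, Dunwoody's accessibility theorem applies to the iterated Stallings decomposition, exhibiting $G$ as the fundamental group of a finite graph of groups with finite edge groups in which every vertex group is finite or one-ended. If all vertex groups were finite, $G$ would be virtually free; hence some vertex group $H$ is one-ended, and, being quasiconvex in $G$, it is itself a one-ended (hence finitely presented) hyperbolic group. Now let $H$ act on the set $G$ by $h\cdot g:=gh^{-1}$: this action is free, and each $h$ in a finite generating set of $H$ acts as right-translation by $h^{-1}$, moving every point a bounded distance --- so it is a translation-like action of $H$ on $G$. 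By Jeandel's theorem (item~(3) of the introduction), undecidability of the domino problem for $H$ passes to $G$; thus it suffices to prove the theorem for $G$ one-ended and hyperbolic.

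\emph{Next I would set up a combinatorial substrate inside the Cayley graph.} Using Cannon's finiteness of cone types I would fix a prefix-closed geodesic combing, producing a bounded-valence spanning tree $T$ of the Cayley graph rooted at $1$ with only finitely many cone types; the Cayley graph is then $T$ together with ``horizontal'' edges joining vertices of equal or adjacent levels. One-endedness is exactly what makes these horizontal edges abundant: it forces the spheres (or bounded-width annuli) to be uniformly linearly connected --- equivalently, $\partial G$ to be connected and locally connected. The Cayley graph therefore looks coarsely like a branching family of one-dimensional ``horocycles'' stacked along a radial time coordinate, the same qualitative picture underlying tilings of the hyperbolic plane, for which Kari and Aubrun--Kari proved the domino problem undecidable.

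\emph{Then I would encode a computation on this substrate.} To each Turing machine $M$ I would assign a finite domino set $\Theta_M$ over $G$ such that the Cayley graph admits a valid $\Theta_M$-colouring if and only if $M$ does not halt, reducing the halting problem. Along each sphere the horizontal edges maintain a tape kept globally consistent; passage from the sphere of radius $n$ to that of radius $n+1$ performs one computation step, the exponential growth of spheres accommodating the auxiliary data that accumulate. Since the ``horocyclic'' combinatorics of a one-ended hyperbolic group vary enormously (rigid versus virtually Fuchsian pieces of Bowditch's JSJ; spheres that are far from being cycles; unbounded, non-uniform branching), I would not try to reproduce Kari's explicit hyperbolic-plane Wang tiles but instead build the required hierarchy with a self-simulating (``fixed-point'') tileset, whose bookkeeping is robust as long as there is enough room and enough self-similarity --- both supplied by hyperbolicity together with one-endedness. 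The virtually-surface-group case treated by Aubrun, Barbieri and Moutot, and the virtually free hanging-Fuchsian vertex groups of the JSJ, are reassuring special cases of the picture.

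\emph{The hard part} is the last step: converting the merely quasi-isometric control one has over the Cayley graph of an arbitrary one-ended hyperbolic group into a genuinely combinatorial scaffold on which a computation can be laid out using only finitely many dominoes. The load-bearing lemmas I expect to need are (i) that the cone-type and combing data are algorithmically accessible and that one-endedness renders their transition structure connected enough to support long-range synchronization across each sphere, and (ii) that a fixed-point tileset can actually be instantiated on this scaffold despite its non-uniform local geometry. Everything before that --- the reduction to one-ended groups and the translation-like-action transfer --- is standard geometric group theory plus the machinery recalled in the introduction.
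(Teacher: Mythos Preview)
Your reduction to the one-ended case is correct and matches the paper's \S\ref{ss:reductions} essentially verbatim (the paper also arranges for a bipartite Cayley graph, a minor convenience you omit). From that point on, however, your proposal diverges from the paper and leaves the decisive step open.

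You propose to encode a Turing machine directly, one step per sphere, and to absorb the irregular branching of a general one-ended hyperbolic group into a Durand--Romashchenko--Shen style fixed-point tileset. You correctly flag this as ``the hard part'' and then do not carry it out. That is the gap: nothing in your outline explains how a self-simulating tileset would be instantiated on spheres whose local combinatorics are governed by an arbitrary cone-type automaton, nor how synchronization across a sphere survives the non-uniform valence. Fixed-point constructions known in the literature require a fairly rigid substrate (a grid, or something simulating one); ``hyperbolic plus one-ended'' is not, by itself, enough structure to run them.

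The paper takes a different route that sidesteps these difficulties. It isolates a general criterion (Theorem~\ref{thm:criterion}): if $\Gamma$ simulates a $q$-regular tower $(\Delta_n,\pi_n)$ of infinite, connected, bounded-degree, \emph{amenable} graphs, then $\Gamma$ has undecidable domino problem. The criterion is proved not via fixed-point tilesets but via Kari's reduction from the immortality problem for piecewise affine maps; amenability of the $\Delta_n$ is what makes the averaging argument (F\o lner sets, Hall's marriage theorem for the carries) go through. For a one-ended hyperbolic group the $\Delta_n$ are \emph{horospheres}, not spheres, and the key technical lemma (Lemma~\ref{lem:polygrowth}) is that horosphere graphs have uniformly polynomial growth, hence are amenable. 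Finally, the non-uniform branching problem you worry about is solved not by a tileset trick but by importing the Cohen--Goodman-Strauss--Rieck ``populated shelling'' subshift (Theorem~\ref{thm:cgr}), which already produces, as an SFT on $G$, a $2$-regular tower quasi-isometric to the horosphere tower.

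In short: your plan is reasonable in spirit but stops at exactly the point where the real work begins, and the paper's actual argument uses ingredients---amenability of horospheres, Kari's affine-map encoding, and the CGR shelling---that do not appear in your sketch.
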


See~\cite{aubrun-barbieri-moutot:dominosurface}*{\S6} for a discussion; in particular, Gromov conjectures that every non-virtually free word hyperbolic group contains an embedded copy of a surface group, and this would imply Theorem~\ref{thm:main}. In fact, it would be enough to prove that every $1$-ended (see~\S\ref{ss:reductions}) word hyperbolic group admits a translation-like action of a surface group. This is, however, not the approach that I follow, and these conjectures remain unfortunately open.

Rather, I give a general criterion (Theorem~\ref{thm:criterion}) that guarantees undecidability of the domino problem on a graph: it suffices to simulate a sequence $(\Gamma_n)_{n\in\N}$ of connected, infinite amenable graphs and contractions $\pi_n\colon\Gamma_n\to\Gamma_{n-1}$ (namely maps sending edges to edges or points) which are $q:1$ matchings for some $q\in\N$ (namely every $x\in\Gamma_{n-1}$ has precisely $q$ preimages.). This criterion is an adaptation of Kari's~\cite{kari:undecidabilitytp} proof of undecidability of the domino problem on $\Z^2$, based on the immortality problem for piecewise affine maps in the plane.

This criterion subsumes all the results mentioned above: taking all the $\Gamma_n\cong\Z$ with $\pi_n(m)=m$ recovers all groups that simulate the grid, while with $\pi_n(m)=\lfloor m/2\rfloor$ recovers all groups that simulate the hyperbolic plane.

I show, in~\S\ref{ss:proof}, how this criterion applies to a hyperbolic group. We first reduce to a $1$-ended hyperbolic group $G$. The $\Gamma_n$ are horospheres in $G$; they are connected because $G$ in $1$-ended, and have polynomial growth. The maps $\pi_n$ are, suitably parameterized, the inverse of geodesic flow. A finite set of domino tiles representing the $\Gamma_n$ and $\pi_n$ can be recovered from a construction of an aperiodic tiling by Cohen, Goodman-Strau\ss\ and Rieck~\cite{cohen-goodman-rieck:hyperbolic}.

Word hyperbolicity of a group $G$ implies strong combinatorial properties, for example that the group has a rational growth series, and therefore that its growth rate is an algebraic number; it admits a set of normal forms for its elements that is given by a regular language. In some sense, it is possible to do even better: for example, Belk, Bleak and Matucci encode $G$ by ``asynchronous transducers'' on the binary tree~\cite{belk-bleak-matucci:embed}, and in a similar spirit Cohen, Goodman-Strau\ss\ and Rieck encode the binary (and ternary) tree into the Cayley graph of $G$ by means of a subshift of finite type. There results seem independent, but are quite close in spirit, and simplify very much the argument.

\subsection{Acknowledgments}
I am very grateful to Sebasti\'an Barbieri and Chaim Goodman-Strauss for their careful reading of a preliminary version of the manuscript, and their thoughtful remarks that helped me improve the exposition.

\section{The domino problem}
We recall the definition given in the Introduction: consider a group $G$ with a finite generating set $S$. Its \emph{Cayley graph} is the graph $\Gamma(G,S)$ with vertex set $G$ and directed edge set $G\times S$; the edge $(g,s)$ starts at $g$, ends at $g s$ and has label $s$.

An instance of the domino problem on an $S$-edge-labeled graph $\Gamma$ consists of a finite set $A$ of \emph{colours} and a set $\Theta\subseteq A\times S\times A$ of \emph{tiles}. One is asked to determine whether a colour can be assigned to each vertex in such a manner that for every edge (say from $x$ to $y$, with label $s$) the triple $(\text{colour}(x),s,\text{colour}(y))$ belongs to $\Theta$.

In the case of Cayley graphs --- and this is the case that interests us here --- this can be rephrased in terms of \emph{subshifts}. Given such a $\Theta$, consider the space
\[X_\Theta\coloneqq\{a\in A^G:\forall s\in S,g\in G:(a(g),s,a(g s))\in\Theta\}.\]
It is a $G$-invariant, closed subspace of the Cantor set $A^G$, the $G$-action being given by $(h\cdot a)(g)=a(h^{-1}g)$, and since $\Theta$ is finite it is a typical example of \emph{subshift of finite type}. The algorithmic question asked, therefore, is \emph{emptiness of subshifts of finite type} given by a presentation by forbidden and allowed patterns.

Yet equivalently, the set of regular subsets of $X_\Theta$ is a Boolean algebra
with $G$-action, and the algorithmic question asked is the \emph{triviality of $G$-Boolean algebras} given by a finite presentation
\[\langle\chi_a:a\in A\mid\sum_{a\in A}\chi_a=1,\quad\chi_a\chi_b=0\;\forall a\neq b,\quad\chi_a(s\cdot\chi_b)=0\;\forall (a,s,b)\notin\Theta\rangle.\]

Decidability of the domino problem for $G$, or equivalently emptiness problem for $G$-subshifts of finite type, or triviality of finitely presented Boolean $G$-algebras, does not depend on the choice of $S$: if $S'$ is another generating set, every instance $A,\Theta$ may readily be converted into an instance $A',\Theta'$ defining an isomorphic subshift.

We may therefore allow or forbid ``patterns'' in the Cayley graph that refer to larger (but still finite) portions of the graph; and transfer ``information'' between any vertices at bounded distance in the Cayley graph, by reserving appropriate ``slots'' in the set $A$ and transferring them edge-by-edge along any chosen path. All this ``domino programming'' appears confusing at first, but is quite standard and will be used without mention throughout this text.

\subsection{Simulations}\label{ss:simulations}
We recall in simplified form the definition of simulations from~\cite{bartholdi-salo:ll}. We consider vertex- and edge-colourings specified on an edge-labeled graph $\Gamma$, and say that $\Gamma$ \emph{simulates} a graph $\Delta$ if there is a set of domino tiles for $\Gamma$ whose solutions have the following properties:
\begin{itemize}
\item they specify at each vertex $v\in\Gamma$ a collection of \emph{marked} vertices $(v,i)_{0<i\le m(v)}$ and a collection of \emph{unmarked} vertices $(v,-i)_{0\le i< u(v)}$;
\item they specify at each edge in $\Gamma$, say joining $v$ to $w$, a subgraph of the complete bipartite graph on $\{(v,i):-u(v)<i\le m(v)\}\times\{(w,j):-u(w)<j\le m(w)\}$;
\item the graph $\Delta$ is isomorphic to the graph with vertex set $\{(v,i):v\in\Gamma,0<i\le m(v)\}$ and an edge from $(v,i)$ to $(w,j)$ whenever there is a sequence of edges from $(v,i)$ to $(w,j)$ in the bipartite graphs, that traverses only unmarked vertices except at its endpoints.
\end{itemize}
There are numerous variants of the definition: the graph $\Delta$ may be oriented, unoriented, or edge-labeled (in which case the simulation must specify the edge labels). Rather than simulating \emph{one} graph, we simulate a \emph{closed space} of graphs, namely the family of all graphs whose finite subgraphs belong to a specified collection; see~\cite{bartholdi-salo:shifts}*{\S3.2} for a discussion. The main point is:
\begin{thm}[\cite{bartholdi-salo:ll}*{Theorem~3.7}]
  If $\Gamma$ simulates $\Delta$ and $\Delta$ has undecidable domino problem, then so does $\Gamma$.\qed
\end{thm}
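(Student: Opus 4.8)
The plan is to read the statement as a computable many--one reduction: given an instance of the domino problem on $\Delta$ I build an instance on $\Gamma$ that is solvable exactly when the original one is, so that a hypothetical decision procedure for $\Gamma$ would decide $\Delta$. First I fix the domino set $A,\Theta$ on $\Gamma$ witnessing the simulation, so that every solution of $(A,\Theta)$ determines, at each vertex $v$ of $\Gamma$, the marked sub-vertices $(v,i)_{0<i\le m(v)}$, the unmarked sub-vertices $(v,-i)_{0\le i<u(v)}$, and along each edge a bipartite graph between the sub-vertices at its endpoints, whose maximal paths running through unmarked sub-vertices (each with a prescribed $S_\Delta$-label) are exactly the edges of the simulated graph $\Delta_0$. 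Since $\Theta$ is finite, $m(v)$, $u(v)$, and the number of bipartite ``threads'' through any unmarked sub-vertex are uniformly bounded. Given an instance $(B,\Xi)$ with $\Xi\subseteq B\times S_\Delta\times B$ of the domino problem on $\Delta$, I take the enlarged colour set $A'=A\times C$ on $\Gamma$, where the extra coordinate records, at each vertex $v$: an element of $B$ attached to each marked sub-vertex $(v,i)$ --- its $\Delta_0$-colour --- and, for each thread through each unmarked sub-vertex, the pair in $B\times S_\Delta$ it currently carries. All this is finite and uniformly bounded, so $A'$ is finite.

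Next I cut out $\Theta'$ by three local conditions: (i) the $A$-coordinate obeys $\Theta$; (ii) along each edge of $\Gamma$, whenever the bipartite graph meets an unmarked sub-vertex, every thread passes through copying its pair in $B\times S_\Delta$ unchanged, so that each maximal such path transports a well-defined pair from one marked endpoint to the other, its $S_\Delta$-component being the label that the simulation assigns to that path; (iii) for every such path, with endpoint $\Delta_0$-colours $b,b'$ and label $s$, one has $(b,s,b')\in\Xi$. All of these refer only to bounded neighbourhoods in $\Gamma$, which by the standard reformulations recalled above may be converted into ordinary tiles, so $(A',\Theta')$ is a bona fide and effectively produced instance of the domino problem on $\Gamma$.

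The verification runs in two directions. A solution of $(A',\Theta')$ restricts to a solution of $(A,\Theta)$, hence simulates some graph $\Delta_0$ in the closed space, and by (iii) the $B$-labels carried by its marked sub-vertices form a $\Xi$-valid colouring of $\Delta_0$. Conversely, given a graph $\Delta_0$ in the space simulated by $\Gamma$ together with a $\Xi$-valid colouring of it, pick a solution of $(A,\Theta)$ realizing $\Delta_0$, push the colours of the marked sub-vertices out along the bipartite paths so as to populate the thread data on the unmarked sub-vertices, and check that (i)--(iii) then hold. Thus $(A',\Theta')$ is solvable if and only if some graph in the space simulated by $\Gamma$ admits a $\Xi$-valid colouring; composing $(B,\Xi)\mapsto(A',\Theta')$ with a hypothetical algorithm for the domino problem on $\Gamma$ would decide that of $\Delta$, contradicting the hypothesis.

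The main obstacle I anticipate is not the reduction but making its two subtleties airtight: the routing bookkeeping in (ii) --- a thread must leave an unmarked sub-vertex consistently, never be lost or spuriously duplicated, and the label read off a bipartite path must match the label the simulation assigns to the corresponding edge of $\Delta_0$ --- and the fact that a simulation produces a \emph{closed space} of graphs, not a single $\Delta$, so that ``$\Delta$ has undecidable domino problem'' must be interpreted for the whole family. On the ``conversely'' side, finiteness of $\Theta$ is exactly what guarantees that the pulled-back thread data is genuinely finite, and a compactness (K\"onig's lemma) argument is needed to pass between finite windows and global colourings, so that solvability of $(A',\Theta')$ really does match solvability of $(B,\Xi)$ over the entire simulated family.
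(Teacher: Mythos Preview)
The paper does not give its own proof of this statement: it is quoted from~\cite{bartholdi-salo:ll}*{Theorem~3.7} and marked with a bare \qedsymbol, so there is nothing in the paper to compare against. Your proposal is the standard many--one reduction one expects here---overlay the simulation tileset $(A,\Theta)$ with a finite layer carrying $B$-colours on marked sub-vertices and routing data along the bipartite threads, and check that the resulting $(A',\Theta')$ is solvable on $\Gamma$ exactly when some simulated $\Delta_0$ admits a $\Xi$-valid colouring---and it is correct in outline. The two issues you flag (consistent thread propagation through unmarked sub-vertices, and the fact that a simulation really produces a closed \emph{space} of graphs rather than a single $\Delta$, so that ``$\Delta$ has undecidable domino problem'' must be read for that family) are precisely the points that need care in a full write-up, and your sketch addresses them adequately.
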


Note that, if $\Delta$ is the Cayley graph of a finitely presented group, ``simulating $\Delta$'' is a weaker notion than ``admitting a quasi-isometric embedding of $\Delta$'' or ``admitting a translation-like action of $\Delta$'': indeed in a simulation the edges of $\Delta$ are represented by regular patterns of edges in $\Gamma$ rather than by edges of bounded length.

\section{Towers of graphs}
A \emph{tower of graphs} consists of a sequence $(\Delta_n)_{n\in\N}$ of graphs and a sequence $(\pi_n\colon\Delta_n\to\Delta_{n-1})_{n\ge1}$ of graph contractions (namely maps that send edges to edges or points). It is \emph{connected}, \emph{bounded-degree}, \emph{amenable} (see below), etc.\ if every $\Delta_n$ has this property. For $q\in\N$, it is \emph{$q$-regular} if every vertex in every $\Delta_n$ has exactly $q$ preimages under $\pi_{n+1}$. The graphs $\Delta_n$ may be oriented, unoriented, edge-labeled, etc.; in our applications, they will be unoriented, simple graphs.

For a graph $\Gamma$, we keep the notation $\Gamma$ for its vertices, and write $E(\Gamma)$ for its set of edges, each of which is an unordered pair of vertices. For a set $A\subseteq\Gamma$ of vertices, recall that its \emph{boundary} $\partial A$ is the set of edges of $\Gamma$ that have precisely one endpoint in $A$, and its \emph{neighbourhood} $\mathscr N(A)$ is the set of vertices that either belong to $A$ or are connected to $A$ by an edge. The graph $\Gamma$ is \emph{amenable} if it admits a \emph{F\o lner sequence}: a sequence $(F_k)_{k\in\N}$ of non-empty finite subsets of $\Gamma$ with
\[k\#(\partial F_k)\le\#F_k\text{ for all }k\in\N.\]
For example, if balls grow subexponentially in $\Gamma$, then a subsequence of them will constitute a F\o lner sequence.

A tower of graphs $(\Delta_n)_{n\in\N}$ may be viewed as a special kind of graph $\Delta$, in which edges come in two flavours: ``vertical'' oriented edges, and ``horizontal'' edges of the same type as those of the $\Delta_n$. The vertex set of $\Delta$ is the disjoint union of the vertex sets of the $\Delta_n$; the horizontal edges are those of the $\Delta_n$; and there is a vertical edge from every $v\in\Delta_n$ to $\pi_n(v)\in\Delta_{n-1}$. A \emph{simulation} of a tower is a simulation of this graph. 

For example, the upper half-grid $\Z\times\N$, with vertical edges pointing down, is the graph associated with a $1$-regular tower of graphs $(\Delta_n)_{n\in\N}$, in which each $\Delta_n$ is a line and each $\pi_n$ is the identity. For $q>1$, a model of the hyperbolic plane arises from the $q$-regular tower of graphs $(\Delta_n)_{n\in\N}$, in which each $\Delta_n\cong\Z$ and $\pi_n(m)=\lfloor m/q\rfloor$.

\begin{thm}\label{thm:criterion}
  Let $\Gamma$ be an edge-labeled graph, and assume that $\Gamma$ simulates a $q$-regular tower of infinite, amenable, connected, bounded-degree graphs. Then $\Gamma$ has undecidable domino problem.
\end{thm}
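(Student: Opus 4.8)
The plan is to use the simulation theorem quoted above to reduce the statement to a single clean assertion: that the domino problem is undecidable on the family of all $q$-regular towers of infinite, amenable, connected, bounded-degree graphs, each such tower being viewed as the one edge-labeled graph $\Delta$ with its horizontal and vertical edges. For this I adapt Kari's encoding of the immortality problem. Recall \cite{kari:undecidabilitytp} that it is undecidable, given a piecewise affine partial self-map $f$ of a bounded polygonal region of the plane with rational data, whether $f$ is \emph{immortal}, i.e.\ whether there is a point whose forward orbit under $f$ is defined for all times. I will produce, computably from $f$ and $q$, a \emph{single} finite domino instance $(A_f,\Theta_f)$ such that every such tower $\Delta$ admits a valid $(A_f,\Theta_f)$-colouring precisely when $f$ is immortal; since the hypotheses on the tower enter only through uniform estimates, $(A_f,\Theta_f)$ does not depend on which tower is simulated, so undecidability of immortality gives the theorem.

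The alphabet $A_f$ records at each vertex $v$ a tuple of \emph{digits} $\mathbf d(v)$ drawn from a fixed finite set of rationals, together with a \emph{register} naming one of the finitely many affine pieces of $f$, and on each horizontal edge a bounded rational \emph{carry}. The horizontal dominoes force the register to be locally --- hence, by connectedness, globally along each level $\Delta_n$ --- constant, and enforce a telescoping identity: viewing the carry as a potential whose discrete gradient is $\mathbf d(v)-\mathbf x$, where $\mathbf x$ is the point of the named affine piece implicitly represented on $\Delta_n$, one obtains
\[
  \Bigl|\sum_{v\in F}\mathbf d(v)-\mathbf x\cdot\#F\Bigr|\le C\cdot\#\partial F
\]
for every finite connected $F\subseteq\Delta_n$, with $C$ depending only on $f$ and the maximal degree; a companion ``Beatty'' constraint forces $\mathbf x$ into the domain of the named piece. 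The vertical dominoes, attached to each edge from $v\in\Delta_n$ to $\pi_n(v)\in\Delta_{n-1}$, are rigged so that, after summing over the $q$ preimages of each vertex and telescoping carries, the represented point of $\Delta_n$ is the image under the named affine map of the represented point of $\Delta_{n-1}$. Here $q$-regularity is exactly what makes the averaging transport cleanly through $\pi_n$ --- a non-uniform branching would distort densities --- while the fact that $\pi_n$ is a contraction keeps $\pi_n^{-1}$ of a F\o lner set F\o lner up to a factor of the maximal degree.

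Granting these dominoes, both directions are short. If a valid colouring exists, fix a F\o lner sequence in each $\Delta_n$; amenability together with the displayed estimate forces $\mathbf x_n$, the limit of the digit averages over that sequence, to exist and to be independent of the sequence chosen, the Beatty constraint puts $\mathbf x_n$ in the domain of $f$, and the vertical rules give $\mathbf x_n=f(\mathbf x_{n-1})$; thus $\mathbf x_0$ has an infinite forward orbit, so $f$ is immortal. Conversely, if $\mathbf x_0$ is immortal with orbit $\mathbf x_n=f^n(\mathbf x_0)$, I colour each level separately: the key combinatorial lemma is that every infinite connected bounded-degree graph admits, for every target point $\mathbf x$ in the relevant cube, a digit colouring together with carries satisfying the horizontal rules with discrepancy at most a constant times the boundary size --- a generalisation of the Sturmian/Beatty sequences Kari uses on $\Z$, obtained by routing the prescribed ``mass'' along the graph. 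Because passing up from $\Delta_{n-1}$ to $\Delta_n$ is a $q$-to-one cover, any such colouring of $\Delta_n$ with density $\mathbf x_n=f(\mathbf x_{n-1})$ matches the chosen colouring of $\Delta_{n-1}$ through the vertical dominoes, and assembling the levels yields a valid colouring of $\Delta$.

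The principal obstacle is this last lemma. On $\Z$ a bounded-discrepancy colouring is classical, but on an arbitrary amenable connected bounded-degree graph one must, for every real target, produce a digit colouring whose partial sums over finite connected sets track the target to within a multiple of the boundary size, solve simultaneously for the horizontal carries, and keep the $q$-fold vertical matching consistent across all levels. Securing clean, tower-independent constants in the telescoping estimate --- so that the one finite set $(A_f,\Theta_f)$ works for \emph{every} admissible tower --- and verifying that $\pi_n^{-1}$ carries a F\o lner sequence (after passing to a subsequence) to a F\o lner sequence are the remaining points needing care; the rest is routine ``domino programming.''
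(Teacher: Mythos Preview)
Your overall architecture matches the paper's: reduce to the immortality problem for piecewise affine maps, store at each vertex an instruction (your ``register''), a data vector $d(v)$ taking values in the corners of the relevant unit square, and bounded rational carries on horizontal edges, subject to the single local constraint $f_{i(v)}(d(v))=\tfrac1q\sum_{w\in\pi_{n+1}^{-1}(v)}d(w)+\sum_{\{v,w\}}c(v,w)$; the ``solution $\Rightarrow$ immortal'' direction then goes via F\o lner averaging exactly as you describe, including the pullback of F\o lner sets along the $q$-to-$1$ contractions. One clarification: the displayed inequality you write is not something the dominoes can enforce directly (the point $\mathbf x$ is not locally available), but rather a property of the particular colouring you must \emph{construct} in the converse direction; there is no separate horizontal ``density'' rule beyond constancy of the register.

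The gap is precisely where you locate it, and the paper resolves it not by an abstract mass-routing argument but by two concrete devices you do not mention. First, every infinite connected bounded-degree graph admits a translation-like $\Z$-action (the paper cites~\cite{cohen-goodman-rieck:hyperbolic}*{Theorem~4.2}), so each $\Delta_n$ decomposes into bi-infinite quasi-lines; one then lays the ordinary one-dimensional $\partial$Beatty sequence of $u_n$ along every such line. This immediately gives the discrepancy bound $\bigl\|\sum_{v\in A}d(v)-(\#A)u_n\bigr\|_\infty\le\#(\mathscr N(A)\setminus A)$, because a finite $A$ meets each line in finitely many intervals, each contributing discrepancy at most~$1$, and the total number of intervals is bounded by the outer boundary of $A$. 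Second, with the digits thus fixed on every level, the existence of bounded carries solving the resulting divergence equation is obtained via Hall's marriage theorem on an auxiliary bipartite graph built from the defects $D(v)=f_{i(v)}(d(v))-\tfrac1q\sum d(v_j)$. Your phrase ``routing the prescribed mass along the graph'' is exactly where these two ideas are hiding; without them the sketch does not yet produce a colouring, and in particular does not yield the tower-independent bound on the carries that you correctly flag as essential.
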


This covers in particular the undecidability results, due to Berger~\cite{berger:undecidability} and Kari~\cite{kari:undecidabilitytp}, of the domino problem respectively in the plane and the hyperbolic plane. The proof will occupy the remainder of this section.

\subsection{Affine maps}
The proof of Theorem~\ref{thm:criterion} relies on a reduction from a problem on affine maps in the plane. An instance consists of a disjoint family $U=U_1\sqcup\cdots\sqcup U_\ell$ of unit squares with integral corners in $\R^2$ and a family of affine maps $f_1,\dots,f_\ell\colon\R^2\righttoleftarrow$ with rational co\"efficients. Together they define a piecewise affine map $f\colon U\to\R^2$ by $f(x)=f_i(x)$ whenever $x\in U_i$.

Hooper~\cite{hooper:tmimmortality} proves that it is undecidable, given a Turing machine, whether it has an immortal configuration, namely a configuration (internal state and tape) of the machine from which it will run forever. Kari noticed that this directly translates to a statement about piecewise affine maps: the tape of a Turing machine may be represented by the ternary expansion of a pair of real numbers (left of the head on the $x$ co\"ordinate, right of the tape on the $y$ co\"ordinate), and its internal state by a number in $\{1,\dots,\ell\}$, so the configuration of a Turing machine may be represented by an element of $U$. The one-step evolution is governed by the modification of the internal state, the value read and written under the head (realized as translations), tape movement (realized by multiplication by $(\begin{smallmatrix}3&0\\0&1/3\end{smallmatrix})$ or its inverse) so all in all by a piecewise affine map $f$. We deduce:
\begin{thm}[\cite{kari:undecidabilitytp}*{\S2}]\label{thm:hooper}
  It is undecidable, given a family of squares $U\subset\R^2$ and a piecewise affine map $f\colon U\to\R^2$ with rational co\"efficients, whether there exists an ``immortal point'': an infinite orbit $(x_0,x_1,\dots)\in U^\N$ with $f(x_i)=x_{i+1}$.\qed
\end{thm}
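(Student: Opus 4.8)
The plan is to reduce from Hooper's theorem quoted above: it is undecidable whether a given deterministic Turing machine has an \emph{immortal configuration}, i.e.\ a state together with a bi-infinite tape and a head position from which the machine never halts. Fix such a machine, and after standard normalizations (deterministic, single bi-infinite tape, head moving at every step) write its tape alphabet as $\{0,1,\dots,n-1\}$ (with $0$ blank), its state set as $\{1,\dots,N\}$, and its transition function as a partial map which, on state $q$ reading the symbol $s$, is either undefined (the machine halts) or returns a new state $q'$, a symbol $s'$ to overwrite the scanned cell, and a direction $\rho\in\{-1,+1\}$.

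First I would encode a configuration --- with head at position $0$ and tape $(a_i)_{i\in\Z}$ --- as the point $(\hat x,\hat y)\in\R^2$ given by
\[\hat x\coloneqq n\,a_0+\sum_{i\ge 1}a_{-i}\,n^{1-i},\qquad\hat y\coloneqq n(q-1)+\sum_{i\ge 1}a_{i}\,n^{1-i}.\]
Then $\lfloor\hat x\rfloor=n\,a_0+a_{-1}$ and $\lfloor\hat y\rfloor=n(q-1)+a_{1}$ record, in base $n$, the scanned symbol $s=a_0$, the symbols $t=a_{-1}$ and $u=a_{1}$ flanking the head, and the state $q$, while the lower digits of $\hat x$ and $\hat y$ carry the rest of the left, resp.\ right, half of the tape. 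For $q\in\{1,\dots,N\}$ and $s,t,u\in\{0,\dots,n-1\}$ put
\[U_{q,s,t,u}\coloneqq[\,ns+t,\ ns+t+1\,)\times[\,n(q-1)+u,\ n(q-1)+u+1\,),\]
a unit square with integral corners. These $Nn^3$ squares are pairwise disjoint --- $(s,t)$ is read off $\lfloor\hat x\rfloor$ and $(q,u)$ off $\lfloor\hat y\rfloor$ --- and their union contains the encoding of every configuration; let $U$ be that disjoint union.

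Next I would define the piecewise affine map square by square. If the machine halts in state $q$ reading $s$, let $f$ carry $U_{q,s,t,u}$ off $U$ by a fixed translation. Otherwise, with transition $(q,s)\mapsto(q',s',\rho)$, let $f$ on $U_{q,s,t,u}$ be the affine map realizing one computation step: overwriting the scanned cell and changing the state are translations, while the tape shift is the linear map $\mathrm{diag}(1/n,\,n)$ when $\rho=+1$ and $\mathrm{diag}(n,\,1/n)$ when $\rho=-1$, moving one base-$n$ digit from one coordinate into the other. The digit pushed under the head ($u$ if $\rho=+1$, $t$ if $\rho=-1$) is exactly the one already pinned down by the choice of square, so $f$ really is a single affine map on $U_{q,s,t,u}$, with rational coefficients; by construction it sends the encoding of a configuration to the encoding of its machine-successor.

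It then remains to match immortal points with immortal configurations. In one direction, the orbit encoding an immortal configuration stays inside $U$ forever. In the other, an immortal point can visit only squares of non-halting type (the image of a halting square misses $U$ and so cannot be the next orbit point), and reading the state, the flanking symbols and the base-$n$ digits off the successive orbit points yields configurations $c_0,c_1,\dots$ with each $c_{i+1}$ the machine-successor of $c_i$, so $c_0$ is immortal. Hence the Turing-machine immortality problem reduces to the existence of an immortal point, which proves the statement. The one point requiring care --- and the only place I expect real friction --- is the non-uniqueness of base-$n$ expansions: a tape eventually constant equal to $n-1$ encodes to a boundary point of its square. This is dealt with either by committing once and for all to half-open squares together with the non-terminating choice of expansion, or, more cleanly, by first enlarging the alphabet (equivalently, the base) so that no admissible tape word is eventually constant-maximal, which keeps every encoding bounded away from the boundary of its square.
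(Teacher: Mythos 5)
Your proposal is correct and follows essentially the same route as the paper, which simply cites Kari's reduction from Hooper's Turing-machine immortality theorem: configurations encoded by base-$n$ (there, ternary) expansions split into left/right halves on the two coordinates, squares indexed by the state and locally scanned symbols, and one machine step realized by translations plus the shear $\mathrm{diag}(1/n,n)$ or its inverse. Your extra care about non-unique expansions (half-open squares or padding the base) is a legitimate fix of a detail the paper's sketch leaves implicit, but it does not change the argument.
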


We are ready to embark in the proof of Theorem~\ref{thm:criterion}. It consists of three steps, described in the next three subsections:
\begin{enumerate}
\item given an instance $f\colon U\dashrightarrow U$ of the immortality problem, construct an associated set $\Theta_f$ of domino tiles;
\item prove that if $f$ has an immortal point then $\Theta_f$ admits a solution;
\item prove that if $\Theta_f$ admits a solution then $f$ has an immortal point.
\end{enumerate}

\subsection{\boldmath From a piecewise affine map $f$ to a set $\Theta_f$ of domino tiles}
Let $U=U_1\sqcup\cdots\sqcup U_\ell$ be a disjoint collection of squares and let $f\colon U\to\R^2$ be a piecewise affine map with rational co\"efficients. Write furthermore $U_i=[x_i,x_i+1]\times[y_i,y_i+1]$. We are given a set of tiles $\Theta'$ that admit solutions in $\Gamma$, and each of whose solutions describes a tower of graphs $(\Delta_n)_{n\in\N}$. Write as above $\Delta$ as the disjoint union of the $\Delta_n$, to which vertical edges representing the contractions $\Delta_n\to\Delta_{n-1}$ are added. We add decorations to the tiles in $\Theta'$, to obtain the tileset $\Theta_f$, as follows. 

Firstly, every vertex $v\in\Delta$ stores an \emph{instruction} $i(v)\in I\coloneqq\{1,\dots,\ell\}$. Since the $\Delta_n$ are connected, rules in $\Theta_f$ guarantee that the instruction is constant on every $\Delta_n$.

Secondly, every vertex stores a \emph{data} vector $d(v)=(d_x(v),d_y(v))\in U\cap\Z^2$, with the constraint that $d_x(v)\in\{x_{i(v)},x_{i(v)}+1\}$ and $d_y(v)\in\{y_{i(v)},y_{i(v)}+1\}$. Evidently only finitely many values may appear in $U\cap\Z^2$.

Thirdly, every horizontal edge $\{v,w\}$ of $\Delta$ stores a \emph{carry} vector $c(v,w)=(c_x(v,w),c_y(v,w))\in C\subset\R^2$, whose range will be further narrowed later. Horizontal edges can be given two orientations, and we require $c(v,w)=-c(w,v)$.

We impose, via domino tiles, an \emph{affine constraint}: at every vertex $v\in\Delta_n$ with $\pi_{n+1}^{-1}(v)=\{v_1,\dots,v_q\}$ we have
\begin{equation}\label{eq:affineconstraint}
  f_{i(v)}(d(v))=\frac{d(v_1)+\dots+d(v_q)}q+\sum_{\{v,w\}\in E(\Delta_n)}c(v,w).
\end{equation}

Note that the rationals involved in~\eqref{eq:affineconstraint} have bounded denominator, namely the least common multiple $M$ of $q$ and the denominators of the $f_i$. We therefore constrain the carries $c(v,w)$ to be rationals of the form $\frac1M\Z^2\cap[-L,L]^2$ for some constant $L$ to be determined later. This will limit the carries to a finite set $C$.

All in all, we have specified a finite amount of data at each vertex of $\Delta$, and therefore at each vertex of $\Gamma$; and only local rules restricting these data.

\subsection{\boldmath From $f$-orbits to solutions of $\Theta_f$}
Let $(u_n)_{n\in\N}$ be an orbit of $f$, say $u_n\in U_{i(n)}$ and $f_{i(n)}(u_n)=u_{n+1}$. We use it to construct a valid colouring of $\Delta$.

The graphs $\Delta_n$ are infinite and connected, so by~\cite{cohen-goodman-rieck:hyperbolic}*{Theorem~4.2}, there exists for all $n\in\N$ a translation-like action of $\Z$ on $\Delta_n$; namely, a bijection between the vertices of $\Delta_n$ and $T_n\times\Z$ for some subset $T_n\subset\Delta_n$ of $\Z$-orbit representatives, in such a manner that there is a path of length $\le3$ between $(t,m)$ and $(t,m+1)$ for all $t\in T_n,m\in\Z$.

Recall that, for a real number $\xi\in\R$, its \emph{$\partial$Beatty sequence} is the bi-infinite sequence
\[B_\xi(m)=\lfloor(m+1)\xi\rfloor-\lfloor m\xi\rfloor.\]
It has the properties that it takes values in $\{\lfloor\xi\rfloor,\lceil\xi\rceil\}$ and its sum over any interval of size $m$ is within $1$ of $m\xi$. For $u=(u_x,u_y)\in U$, we define its $\partial$Beatty sequence co\"ordinatewise, as $B_u(m)=(B_{u_x}(m),B_{u_y}(m))$.

Returning to our orbit $(u_n)$, we convert it to a decoration of $\Delta$ as follows. The instructions on each $\Delta_n$ are set to be $i(n)$. The data at $(t,m)\in\Delta_n\cong T_n\times\Z$ is set to be $d(t,m)\coloneqq B_{u_n}(m)$; namely, we put the $\partial$Beatty sequence of $u_n$ on every copy of $\Z$ in $\Delta_n$, centered at an element of $T_n$. Note that they take values in the corners of $U_{i(n)}$ as required. It remains to show that the carries can be specified so as to satisfy~\eqref{eq:affineconstraint}; this will require choosing the constant $c$ in the definition of $C=\frac1M\Z^2\cap[-L,L]^2$.

For $v\in\Delta_n$, define its \emph{defect}
\begin{equation}\label{eq:error}
  D(v)=(D_x(v),D_y(v))\coloneqq f_{i(v)}(d(v))-\frac{d(v_1)+\dots+d(v_q)}q.
\end{equation}
Since $U$ and $f(U)$ are bounded, the defects are bounded; and have denominator at most $M$ in each co\"ordinate. If $L\in\N$ is big enough, then we will have $M D_x(v)+L\in[0,2L]\cap\Z$ for all $n,v\in\Delta_n$, and similarly $M D_y(v)+L\in[0,2L]\cap\Z$.

We now show how the $x$-co\"ordinates $c_x(u,v)$ of the carries can be selected; the same argument should be performed on the $y$-co\"ordinates. We construct a bipartite graph $\Xi$, with black and white vertices; the white vertex set is $\{(v,-i):v\in\Delta_n,1\le i\le M D_x(v)+L\}$, and the black vertex set is $\Delta_n\times\{1,\dots,L\}$. There is an edge between $(v,-i)$ and $(w,j)$ whenever $v=w$ or $\{v,w\}$ is an edge of $\Delta_n$, namely $w\in\mathscr N(\{v\})$, the neighbourhood of $v$.

Recall that a \emph{matching} is a bijection within $\Xi$ between the black and white vertices. If there is a matching $\Phi$, then one may set
\[c_x(v,w)=\frac{\#\{\text{edges $(v,-i)\leftrightarrow(w,j)$ in }\Phi\}-\#\{\text{edges $(w,-i)\leftrightarrow(v,j)$ in }\Phi\}}M\]
and note that~\eqref{eq:affineconstraint} is satisfied.

To prove the existence of a matching, we apply Hall's marriage theorem~\cite{hall:subsets}: we are to show that, for any finite monochromatic set of vertices, there is an at least as large set of vertices of the other colour that is connected to it.

Consider a collection $B$ of white vertices (the argument is essentially the same starting with black vertices). Since the edges of $\Xi$ form a collection of complete bipartite graphs, we may assume without loss of generality that $B$ is of the form $\{(v,-i):v\in A,1\le i\le M D_x(v)+L\}$ for some finite subset $A$ of $\Delta_n$; indeed if $(v,-i)\in B$, we may add to $B$ all $(v,-i')$ increasing $B$ while not changing its neighbourhood in $\Xi$.

The intersection of $A$ with every line $\{t\}\times\Z$, for $t\in T_n$, consists of a finite collection of intervals in $\Z$, each bounded by two endpoints in $\mathscr N(A)\setminus A$. Therefore, the total number of intervals is at most $\#(\mathscr N(A)\setminus A)$. On each interval $\{a,a+1,\dots,b\}$, by the definition of $\partial$Beatty sequences, we have $\|d(t,a)+\dots+d(t,b)-(b-a+1)u_n\|_\infty<1$, so
\begin{gather*}
  \bigg\|\sum_{v\in A}d(v)-(\#A)u_n\bigg\|_\infty\le\#(\mathscr N(A)\setminus A),\\
  \intertext{and because $f_{i(n)}$ is affine,}
  \bigg\|\sum_{v\in A}f_{i(v)}(d(v))-(\#A)u_{n+1}\bigg\|_\infty\le\|f_{i(n)}\|\cdot\#(\mathscr N(A)\setminus A),\\
  \intertext{where $\|f_{i(n)}\|$ denotes the norm of the linear part of $f_{i(n)}$. Similarly,}
  \bigg\|\sum_{v\in\pi_{n+1}^{-1}(A)}\frac{d(v)}q-(\#A)u_{n+1}\bigg\|_\infty\le\#(\mathscr N(A)\setminus A),\\
  \intertext{so}
  \bigg\|\sum_{v\in A}D(v)\bigg\|_\infty\le K\#(\mathscr N(A)\setminus A)
\end{gather*}
for a universal constant $K$. It follows that $B$ contains at most $L\#A+M K\#(\mathscr N(A)\setminus A)$ white vertices. These are connected to $L\#\mathscr N(A)$ black vertices, so Hall's condition is satisfied as soon as $L\ge M K$.

\subsection{\boldmath From solutions of $\Theta_f$ to $f$-orbits}
We now show conversely that if the domino problem has a solution, then there is an immortal point of $f\colon U\dashrightarrow U$. Let $\Delta$ be the tower of graphs represented by some solution of $\Theta'$, and let $(i,d,c)$ be the decoration of $\Delta$, with $i=(i_n)_{n\in\N}$, $d=(d_n)_{n\in\N}$ and $c=(c_n)_{n\in\N}$ and $i_n\in I$, $d_n\colon\Delta_n\to D$ and $c_n\colon E(\Delta_n)\to C$ the instructions, values and carries respectively.

Since the graphs $\Delta_n$ are amenable, they admit F\o lner sequences. In fact, it suffices to assume that $\Delta_0$ is amenable. Indeed, F\o lner sequences may be \emph{pulled back} by the graph contractions $\pi_n\colon\Delta_n\to\Delta_{n-1}$. More precisely,
\begin{lem}
  If $(F_k)_{k\in\N}$ is a F\o lner sequence in $\Delta_{n-1}$, then $(\pi_n^{-1}(F_{qk}))_{k\in\N}$ is a F\o lner sequence in $\Delta_n$.
\end{lem}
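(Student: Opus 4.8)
The plan is to estimate the boundary of $\pi_n^{-1}(F)$ in $\Delta_n$ in terms of the boundary of $F$ in $\Delta_{n-1}$, using that $\pi_n$ is a $q:1$ graph contraction. First I would observe that since $\pi_n$ sends each edge of $\Delta_n$ either to an edge of $\Delta_{n-1}$ or to a single vertex, an edge $e=\{v,w\}\in E(\Delta_n)$ lies in $\partial\big(\pi_n^{-1}(F)\big)$ only if exactly one of $v,w$ is in $\pi_n^{-1}(F)$; this forces $\pi_n(v)$ and $\pi_n(w)$ to be distinct (otherwise both endpoints would be in or both out), so $\pi_n(e)$ is a genuine edge of $\Delta_{n-1}$, and moreover that edge has exactly one endpoint in $F$, i.e.\ $\pi_n(e)\in\partial F$. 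Hence $\partial\big(\pi_n^{-1}(F)\big)\subseteq\pi_n^{-1}\big(\partial F\big)$, where $\pi_n^{-1}$ on edges means the set of edges mapping into $\partial F$.

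Next I would bound the number of edges of $\Delta_n$ lying above a given edge of $\Delta_{n-1}$. If $\{a,b\}\in E(\Delta_{n-1})$, then any edge above it joins a vertex of $\pi_n^{-1}(a)$ to a vertex of $\pi_n^{-1}(b)$, and $\#\pi_n^{-1}(a)=\#\pi_n^{-1}(b)=q$ by $q$-regularity, so there are at most $q^2$ such edges. Therefore
\[
\#\,\partial\big(\pi_n^{-1}(F_{qk})\big)\;\le\;q^2\,\#\,\partial F_{qk}.
\]
On the other hand $\#\,\pi_n^{-1}(F_{qk})=q\,\#F_{qk}$, again by $q$-regularity. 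Combining these with the Følner estimate $qk\,\#\partial F_{qk}\le\#F_{qk}$ gives
\[
k\,\#\,\partial\big(\pi_n^{-1}(F_{qk})\big)\;\le\;k q^2\,\#\partial F_{qk}\;=\;q\cdot qk\,\#\partial F_{qk}\;\le\;q\,\#F_{qk}\;=\;\#\,\pi_n^{-1}(F_{qk}),
\]
which is exactly the Følner inequality for the sets $\pi_n^{-1}(F_{qk})$ in $\Delta_n$. Each $\pi_n^{-1}(F_{qk})$ is non-empty and finite since $F_{qk}$ is non-empty and finite and $\pi_n$ is $q:1$, so $(\pi_n^{-1}(F_{qk}))_{k\in\N}$ is a Følner sequence in $\Delta_n$.

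The only place requiring care — the ``main obstacle,'' though it is mild — is the first step: one must use the definition of a graph contraction correctly to rule out edges of $\Delta_n$ that get collapsed to a point contributing to the boundary, and to verify that an edge in $\partial(\pi_n^{-1}(F))$ really does project to an edge in $\partial F$ rather than to an edge with both or neither endpoint in $F$. Once that containment $\partial(\pi_n^{-1}(F))\subseteq\pi_n^{-1}(\partial F)$ is pinned down, the rest is the bookkeeping above; the factor $q$ (rather than $q^2$) slack in the indexing by $qk$ is precisely what absorbs the $q^2$ blow-up of edge counts against the $q$-fold growth of vertex counts.
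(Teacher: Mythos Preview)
Your proof is correct and follows exactly the same approach as the paper's: the paper's one-line argument records only the two facts $\#\pi_n^{-1}(F_k)=q\,\#F_k$ and $\#\pi_n^{-1}(\partial F_k)\le q^2\,\#\partial F_k$, and you have simply unpacked these (in particular the containment $\partial(\pi_n^{-1}(F))\subseteq\pi_n^{-1}(\partial F)$ via the contraction property) and made explicit how the reindexing by $qk$ absorbs the extra factor of $q$.
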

\begin{proof}
Since the map $\pi_n$ is $q$-to-$1$, we have $\#\pi_n^{-1}(F_k)=q\#F_k$ and $\#\pi_n^{-1}(\partial F_k)\le q^2\#\partial F_k$.
\end{proof}

Let $F=(F_k)_{k\in\N}$ be a F\o lner sequence in $\Delta_0$, and consider the real sequence $(\sum_{v\in F_k}d(v)/\#F_k)_{k\in\N}$. It is bounded, since $d$ is bounded. Let $F^{(0)}=(F^{(0)}_k)_{k\in\N}$ be a subsequence on which it converges; $F^{(0)}$ is again a F\o lner sequence. For each $n\in\N$ let $F^{(n)}=(F_k^{(n)})_{k\in\N}$ be the pull-back of $F^{(n-1)}$ to $\Delta_n$, and define $u_n\in U$ by
\[u_n=\lim_{k\to\infty}\frac1{\#F^{(n)}_k}\sum_{v\in F_k^{(n)}} d(v).\]
These limits exist, because of the tiling rules, as we will see below; but any at rate we could have re-extracted subsequences to guarantee convergence. We have
\begin{align*}
  f_{i_n}(u_n)&=\lim_{k\to\infty}\frac1{\#F^{(n)}_k}\sum_{v\in F_k^{(n)}}f(d(v))\\
              &=\lim_{k\to\infty}\frac1{\#F^{(n)}_k}\sum_{v\in F_k^{(n)}}\bigg(\sum_{w\in\pi_{n+1}^{-1}(v)}d(w)/q+\sum_{\{v,w\}\in E(\Delta_n)}c(v,w)\bigg)\\
              &=\lim_{k\to\infty}\bigg(\frac1{\#F^{(n+1)}_k}\sum_{v\in F_k^{(n+1)}}d(v)+\frac1{\#F^{(n)}_k}\sum_{\substack{\{v,w\}\in E(\Delta_n)\\\#\{v,w\}\cap F_k^{(n)}=1}}c(v,w)\bigg)
\end{align*}
because edges with $0$ or $2$ endpoints in $F_k^{(n)}$ contribute nothing to the sum, so
\[\big\|f_{i_n}(u_n)-u_{n+1}\big\|_\infty=\lim_{k\to\infty}(\max c)\frac{\#\partial F^{(n)}_k}{\#F_k^{(n)}}=\lim_{k\to\infty}\frac Lk=0.\]

We have simultaneously proven that the $u_n$ are well-defined elements of $U$ and that they form an immortal orbit of $f$.

\section{Word hyperbolic groups}
A locally finite graph $\Gamma$ is called \emph{$\delta$-hyperbolic} for a constant $\delta\in\N$ if its triangles are ``thin'': \emph{given any three vertices $x,y,z$ and any three geodesic segments $[x,y],[y,z],[x,z]$, the $\delta$-neighbourhood of $[x,y]\cup[y,z]$ contains $[x,z]$.}

A finitely generated group $G$ is called \emph{word hyperbolic} if there are $\delta\in\N$ and a generating set such that the associated Cayley graph of $G$ is $\delta$-hyperbolic; equivalently, every Cayley graph is $\delta$-hyperbolic (for a $\delta$ depending on the generating set.)

This notion was introduced by Gromov~\cite{gromov:hyperbolic}, see also~\cite{ghys-h:gromov}. It has numerous consequences, for example that $G$ is finitely presented and its word problem is solvable in linear time. It is also generic among finitely presented groups, in the sense that, with overwhelming probability, the presentation $\langle x_1,\dots,x_m\mid r_1,\dots,r_\ell\rangle$ defines a word hyperbolic group as $\ell,|r_i|\to\infty$. We shall not make use of these facts.

\subsection{Two reductions}\label{ss:reductions}
To prove the undecidability of the domino problem on non-virtually-free word hyperbolic groups, it is sufficient to prove it for an appropriate subgroup or finite extension.

Firstly, let $G=\langle S\rangle$ be a word hyperbolic group. At the cost of replacing $G$ by an overgroup containing $G$ with index $\le 2$, we may assume that all relations of $G$ have even length, or equivalently that $\Gamma(G,S)$ is bipartite. Indeed consider the subgroup $\widehat G=\langle (s,1):s\in S\rangle$ of $G\times\Z/2$. The projection to the first co\"ordinate maps onto $G$ and has kernel either $1$ (in which case $\widehat G=G$) or $\Z/2$ (in which case $\widehat G=G\times\Z/2$). Since $\widehat G$ is hyperbolic whenever $G$ is hyperbolic, and $\widehat G$ in all cases contains $G$, we consider $\widehat G$ from now on and rename it $G$.

For a second reduction, recall that the \emph{number of ends} of a finitely generated group $G$ is the supremum of the number of infinite connected components in the complement of a large ball in its Cayley graph. Hopf~\cite{hopf:ends} and independently Freudenthal~\cite{freudenthal:ends} proved that the number of ends is $0,1,2$ or $\infty$, with $0$ ends corresponding to finite groups and $2$ ends corresponding to virtually-$\Z$ groups; and Stallings~\cite{stallings:ends} proved that infinitely many ends corresponds to groups that split as free products or HNN extensions over a finite subgroup; and finally Dunwoody~\cite{dunwoody:accessibility} proved that, if $G$ is finitely presented, then this splitting stops after finitely many steps, resulting in a finite graph of groups with $({\le}1)$-ended vertex groups and finite edge groups.

Let now $G$ be a non-virtually-free word hyperbolic group. It is finitely presented, so admits such a decomposition as a graph of groups; and at least one vertex group is $1$-ended, otherwise $G$ would be virtually free. Furthermore, this vertex group is quasi-convexly embedded in $G$, so is itself word hyperbolic.

In conclusion, we may assume without loss of generality that $G$ is a $1$-ended word hyperbolic group whose Cayley graph is bipartite.

\subsection{Cone types}\label{ss:cones}
Let $G=\langle S\rangle$ be a word hyperbolic group, and assume by~\S\ref{ss:reductions} that its Cayley graph is bipartite.

Every geometric edge of $\Gamma(G,S)$ may now be oriented towards the origin: say its extremities are $g,h$; then if $\|g\|>\|h\|$ the edge has an arrow from $g$ to $h$.

The \emph{cone} of $g\in G$ is defined as
\[C(g)\coloneqq\{g h\in G:\|g h\|=\|g\|+\|h\|\};\]
equivalently, these are the vertices of $\Gamma(G,S)$ that may reach $g$ by following edges in the direction of their arrows. The \emph{cone type} consists of those $h\in G$ with $\|g h\|=\|g\|+\|h\|$, equivalently $g^{-1}C(g)$.

Let $\delta$ be a hyperbolicity constant of $\Gamma(G,S)$, and let $B=\{h\in G:\|h\|\le2\delta\}$ denote the ball of radius $2\delta$ in $G$. We decorate each vertex $g$ of $\Gamma(G,S)$ with the following finite amount of information: for every $h\in G$ with $\|h\|\le2\delta$, record the quantity $\|g h\|-\|g\|$. This information therefore defines an element $N_g\in(\Z\cap[-2\delta,2\delta])^B$. A fundamental, if easy, result (see e.g.~\cite{cannon:combstructure}*{Lemma~7.1}) is that $N_g$ determines the cone type of $g$, and the cone type of all neighbours of $g$ in its cone. In particular, there is a finite number of cone types.

The most important property, for us, is that \emph{geodesics fellow-travel or diverge}. Let us call \emph{geodesic ray} a map $r\colon\N\to G$ such that $\|r(n)\|=\|r(0)\|+n$ for all $n\in\N$. The following result is classical, see e.g.~\cite{drutu-kapovich:ggt}*{Lemma~11.75}:
\begin{lem}\label{lem:raysfellowtravel}
  Let $r,s\colon\N\to G$ be geodesic rays with $\|r(0)\|=\|s(0)\|$, and assume that $\|r(n)-s(n)\|$ is bounded. Then $\|r(n)-s(n)\|\le2\delta$ for all $n\in\N$.
\end{lem}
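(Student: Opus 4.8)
The plan is to run a standard "thin triangles" argument, exploiting that the two rays start at the same distance from the origin and that $\|r(n)-s(n)\|$ stays bounded; call the bounding constant $R$, so $\|r(n)-s(n)\|\le R$ for all $n$. First I would fix $n$ and pick a large parameter $m\ge n$, and look at the geodesic triangle with vertices $1$ (the origin), $r(m)$, and $s(m)$. Because $r$ and $s$ are geodesic rays, the segments $[1,r(m)]$ and $[1,s(m)]$ can be taken to be the initial portions $r([0,m])$ and $s([0,m])$; choose any geodesic $[r(m),s(m)]$, which by hypothesis has length $\le R$. By $\delta$-thinness applied to this triangle, the vertex $r(n)$ on the side $[1,r(m)]$ lies within $\delta$ of the union $[1,s(m)]\cup[s(m),r(m)]$.

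The key step is to rule out the second piece. If $r(n)$ were within $\delta$ of a point $p$ on the short side $[r(m),s(m)]$, then, using $\|p\|\ge \|s(m)\|-R = \|r(0)\|+m-R$, we would get $\|r(n)\| = \|r(0)\|+n \ge \|r(0)\|+m-R-\delta$, i.e. $n \ge m-R-\delta$. So as soon as $m$ is chosen with $m > n + R + \delta$, the point $r(n)$ must instead be within $\delta$ of the side $[1,s(m)] = s([0,m])$: there exists $k$ with $\|r(n)-s(k)\|\le\delta$. Comparing norms, $|n-k| = \big|\,\|r(n)\|-\|s(k)\|\,\big| \le \|r(n)-s(k)\|\le\delta$, so $s(k)$ is within $\delta$ of $s(n)$ along the ray $s$ (since $\|s(k)-s(n)\| = |k-n|\le\delta$). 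Hence $\|r(n)-s(n)\|\le\|r(n)-s(k)\|+\|s(k)-s(n)\|\le 2\delta$, which is the claimed bound, and it holds for every $n$ since $m$ could be taken arbitrarily large.

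I expect the main obstacle to be purely bookkeeping: making sure the segments $[1,r(m)]$ and $[1,s(m)]$ in the triangle really can be taken to be the traces of the rays (immediate from the ray definition, since any subpath of a geodesic ray is geodesic), and being careful that $\delta$-thinness as stated in the excerpt ("the $\delta$-neighbourhood of $[x,y]\cup[y,z]$ contains $[x,z]$") is applied with the roles of the sides arranged so that the side containing $r(n)$ is the one required to lie near the other two. There is also a minor point that distances here are graph distances between vertices, so all the quantities $n$, $k$, $|n-k|$ are integers and the triangle inequality is exact; no issues arise from the bipartite reduction. None of these steps is deep, so the lemma follows quickly once the triangle is set up correctly.
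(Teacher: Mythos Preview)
Your argument is correct and is essentially the paper's own proof: form a thin triangle with apex $1$ and far vertices $r(m),s(m)$, rule out proximity of $r(n)$ to the short side by a norm count, then compare norms to get $|n-k|\le\delta$ and conclude $\|r(n)-s(n)\|\le2\delta$. One small slip: the segment $[1,r(m)]$ is not literally $r([0,m])$ unless $r(0)=1$; the paper fixes this by first prepending geodesics from $1$ to $r(0)$ and $s(0)$ (equivalently, reducing to $r(0)=s(0)=1$), after which your argument goes through verbatim---and in fact you are more explicit than the paper in dispatching the short-side case.
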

\begin{proof}
  Extending the rays $r,s$ by geodesic paths from $1$ to $r(0),s(0)$ respectively, we may assume $r(0)=s(0)=1$ without loss of generality. Let $K$ be a bound for all $\|r(n)-s(n)\|$. Now given $n\in\N$, consider a geodesic triangle consisting of $r([0,n+K+\delta])$, a geodesic segment $p$ connecting $r(n+K+\delta)$ to $s(n+K+\delta)$, and $s([0,n+K+\delta])$. The point $r(n)$ is at distance at most $\delta$ from $s([0,n+K+\delta])\cup p$, so at distance at most $\delta$ from $s(m)$ for some $m\in[0,n+K+\delta]$; so at distance at most $2\delta$ from $s(n)$.
\end{proof}

Every $g\neq1$ is connected by a geodesic ray from $1$; therefore every $g\in G$ has an edge with incoming arrow. Some $g\in G$ may have more than one outgoing arrow; in this case, we linearly order $S$ and define the \emph{principal outgoing edge} as the one associated with the least generator.

The set of principal outgoing edges defines a spanning tree $\Upsilon$ of $\Gamma(G,S)$, and $\|g-1\|_\Upsilon=\|g-1\|_{\Gamma(G,S)}=\|g\|$ for all $g\in G$. Furthermore, $\Upsilon$ is produced by a regular grammar~\cite{epstein-:wp}*{Theorem~3,4,5}: every vertex has one of finitely many types; and the type of a vertex determines the types of all of its descendants. (In this manner, Cannon~\cite{cannon:combstructure} proves that the growth series of $G$ is a rational function.)

\subsection{Horospheres}
Let $\beta\colon G\to\Z$ be a \emph{horofunction} (aka \emph{Busemann function}): for the edge orientation defined in~\S\ref{ss:cones}, or any pointwise limit of such edge orientations, a function satisfying $\beta(g)=\beta(h)+1$ whenever there is an edge pointing from $g$ to $h$. For example, for any $n\in\Z$ the function $\beta(g)=\|g\|-n$ is suitable for the edge orientation given above. Consider a geodesic ray $r\colon\N\to G$ and define $\beta_r(g)=\lim_{n\to\infty}\|r(n)-g\|-n$; this amounts to taking a limit of edge orientations as $1$ is ``pushed away towards $r(\infty)$''. This is also a Busemann function, and it is unbounded.

In all cases, fix once and for all a horofunction $\beta$. Its corresponding \emph{horosphere} is the subset $\beta^{-1}(0)$. We construct a simple graph $\Gamma_\beta$, called \emph{horosphere graph}, with vertex set $\{g\in\beta^{-1}(0):C(g)\text{ is infinite}\}$, and an edge joining $g$ and $h$ whenever there are geodesic rays starting at $g,h$ that remain at bounded distance (perforce $\le2\delta$ by Lemma~\ref{lem:raysfellowtravel}) from each other.

We are in effect combining two graph structures on $G$. The original edges of $\Gamma(G,S)$ are all oriented, and we call them \emph{vertical edges}, thinking of them as pointing downwards. The edges of $\Gamma_\beta$ (and of all $\Gamma_{\beta+n}$ associated with translates of $\beta$) define unoriented \emph{horizontal edges}.

The following result is not logically necessary to prove our main result (it would be enough to prove that all connected components are infinite if $\Gamma_\beta$ is infinite), but helps clarify the situation:
\begin{lem}[\cite{cohen-goodman-rieck:hyperbolic}*{Lemma~7.4}]\label{lem:connected}
  If $G$ is $1$-ended, then the graph $\Gamma_\beta$ is connected as soon as it is non-empty.
\end{lem}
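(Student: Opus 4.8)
The plan is to identify $\Gamma_\beta$ with the nerve (more precisely, the $1$-skeleton of the nerve) of a closed cover of the Gromov boundary $\partial G$, and to deduce connectedness of $\Gamma_\beta$ from connectedness of $\partial G$ --- which is exactly where $1$-endedness is used. I would carry this out in full for the concrete horofunction $\beta=\|\cdot\|-n$, whose horosphere is the metric sphere $\{\|g\|=n\}$ and which is a legitimate choice; the general case is parallel but needs extra care, discussed at the end.

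To each vertex $g$ of $\Gamma_\beta$ associate its \emph{shadow} $\Psi(g)\subseteq\partial G$: the set of endpoints of geodesic rays issuing from $g$. Since a geodesic ray $r$ from $g$ has $\|r(k)\|=\|g\|+k$, it lies in $C(g)$ and, prolonged by a geodesic $[1,g]$, becomes a geodesic ray from $1$ through $g$; so $\Psi(g)$ is also the set of boundary points reached along some geodesic ray from $1$ through $g$. One checks that $\Psi(g)$ is closed (K\"onig's lemma on the locally finite graph $\Gamma(G,S)$), and that $\Psi(g)\neq\emptyset$ exactly when $C(g)$ is infinite, i.e.\ exactly for the vertices of $\Gamma_\beta$ (an infinite cone yields arbitrarily long geodesic segments from $g$, hence a ray). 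The family $\mathscr F=\{\Psi(g):g\in\Gamma_\beta\}$ \emph{covers} $\partial G$ --- every $\xi\in\partial G$ is the endpoint of a geodesic ray from $1$, which passes at step $n$ through an infinite-cone vertex --- and it is \emph{finite}, since $\{\|g\|=n\}$ is finite. Finally, and this is the crux of the identification, $g$ and $h$ span an edge of $\Gamma_\beta$ if and only if $\Psi(g)\cap\Psi(h)\neq\emptyset$: if they share an endpoint $\xi$, then geodesic rays $g\to\xi$ and $h\to\xi$ both start on the sphere $\{\|\cdot\|=n\}$, so Lemma~\ref{lem:raysfellowtravel} keeps them within $2\delta$ of each other; conversely, a geodesic ray from $g$ staying at bounded distance from one from $h$ has a common endpoint, lying in $\Psi(g)\cap\Psi(h)$.

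Thus $\Gamma_\beta$ \emph{is} the intersection graph of $\mathscr F$. Now the intersection graph of a finite cover of a \emph{connected} space $X$ by closed sets is connected: a disconnection would partition the index set into two non-empty parts with no edge between them, and the corresponding unions of closed sets would exhibit $X$ as a disjoint union of two non-empty closed subsets. Since $G$ is $1$-ended, $\partial G$ is connected (see e.g.~\cite{drutu-kapovich:ggt}), and the lemma follows.

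For a general horofunction $\beta$, obtained as a limit of edge orientations and ``centred'' at a boundary point $\xi_0$, the same scheme applies with $\mathscr F$ a cover of $\partial G\setminus\{\xi_0\}$ by the shadows (the horosphere is no longer a metric sphere), and one concludes once one knows that $\partial G\setminus\{\xi_0\}$ is connected, i.e.\ that $\partial G$ has no global cut point --- the theorem of Bowditch and Swarup for $1$-ended hyperbolic groups. The two points that then require real work are: that this cover is locally finite in $\partial G\setminus\{\xi_0\}$ (for $\xi$ in a compact subset, all geodesics from $\xi_0$ to $\xi$ stay uniformly Hausdorff-close and meet the horosphere in a bounded, hence finite, set), and that the edge relation of $\Gamma_\beta$ is still captured by intersection of shadows --- here the naive statement must be refined, since two geodesic rays to a common boundary point that start on the horosphere need not remain at \emph{uniformly} bounded distance unless they also agree at the level of Busemann functions (equivalently, $g$ and $h$ are adjacent precisely when they lie, at the same height, near a common geodesic issuing from $\xi_0$). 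This last subtlety is, I expect, the main obstacle; for $\beta=\|\cdot\|-n$ it does not arise and the argument above is complete. (If one only wants the weaker statement, sufficient for the applications, that every connected component of $\Gamma_\beta$ is infinite, then the no-cut-point theorem can be avoided: a finite component would supply finitely many closed shadows whose union, together with the single accumulation point $\xi_0$ of the remaining shadows, would separate the continuum $\partial G$.)
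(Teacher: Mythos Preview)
Your argument for the bounded horofunction $\beta=\|\cdot\|-n$ is correct, and is in spirit a ``boundary'' version of the paper's own argument there: where you pass to shadows $\Psi(g)\subseteq\partial G$ and use that a disconnection of the nerve would disconnect the connected compactum $\partial G$, the paper stays inside $G$ and observes directly that a disconnection $V_0\sqcup V_1$ of $\Gamma_\beta$ would make $\bigcup_{g\in V_0}C(g)$ and $\bigcup_{g\in V_1}C(g)$ separate infinite pieces of $G\setminus B(1,n-1)$, contradicting $1$-endedness. The two arguments are essentially dual to each other; yours imports the equivalence ``$1$-ended $\Leftrightarrow\partial G$ connected'', the paper's does not.

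The genuine divergence is in the unbounded case. The paper does \emph{not} invoke the Bowditch--Swarup no-cut-point theorem; instead it approximates $\Gamma_\beta$ by finite horosphere graphs $\Gamma_{\beta_n}$ (already connected by the first paragraph) and uses Bestvina--Mess' property~$(\ddagger_M)$ to find, for any $x,y\in\Gamma_\beta$, a path in $\Gamma(G,S)$ above the horosphere of length bounded in terms of $\|x-y\|$ alone; projecting down gives a path in $\Gamma_{\beta_n}$ of length independent of $n$, which therefore survives in the limit. This is considerably lighter than your route: $(\ddagger_M)$ is an elementary consequence of hyperbolicity plus $1$-endedness, whereas the no-local-cut-point theorem is a deep result. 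Your sketch via $\partial G\setminus\{\xi_0\}$ can be made to work (in particular, your worry about the edge relation is not fatal: two bi-infinite geodesics from $\xi_0$ to a common $\xi$ are uniformly Hausdorff-close, and equal Busemann value at the starting points forces the parametrizations to line up), but you have traded an elementary limiting argument for a substantially harder input, and you have not actually carried out the verification of local finiteness of the shadow cover or the edge--intersection correspondence.
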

\begin{proof}
  Let us suppose first that $\Gamma_\beta$ is finite, defined for the horofunction $\beta(g)=\|g\|-n$. I claim that $\Gamma_\beta$ is connected. Were it not, partition its vertices as $V_0\sqcup V_1$ in such a manner that there is no edge between $V_0$ and $V_1$. Set $C_i=\bigcup_{g\in V_i} C(g)$ for $i=0,1$. There would then be no edge of $\Gamma(G,S)$ between $C_0$ and $C_1$: otherwise, say there is an edge pointing from $h_0\in C(g_0)$ to $h_1\in C(g_1)$ with $g_i\in V_i$; then $h_1\in C(g_0)\cap C(g_1)$. We then have $G\setminus B_{G,S}(1,n-1)=C_0\sqcup C_1\sqcup\bigcup_{g\in G:\|g\|=n,C(g)\text{ finite}}C(g)$ and $G$ has at least two ends.

  If $\Gamma_\beta$ comes from an unbounded horofunction, then $\Gamma_\beta$ is a pointwise limit of finite horosphere graphs $\Gamma_{\beta_n}$, themselves connected. Consider two points $x,y\in\Gamma_\beta$, and let $n$ be large enough that they also belong to $\Gamma_{\beta_n}$. By~\cite{bestvina-mess:boundary}*{Property $(\ddagger_M)$}, there exists a path joining $x$ to $y$ in $\Gamma(G,S)$ and visiting only vertices $z$ with $\beta_n(z)\ge0$; and furthermore the length of this path is bounded by a function of $\|x-y\|_{\Gamma(G,S)}$ only. Connect every vertex on such a path down to $\Gamma_{\beta_n}$; this shows that $x,y$ are connected in $\Gamma_{\beta_n}$ by a path of length bounded by a function of $\|x-y\|_{\Gamma(G,S)}$. They therefore remain connected in the limit.
\end{proof}

To every horosphere graph $\Gamma_\beta\eqqcolon\Gamma_0$ are associated a family of horosphere graphs $\Gamma_n\coloneqq\Gamma_{\beta-n}$ associated with shifts of the horofunction $\beta$. If $\beta$ is unbounded, these graphs are all non-empty, and we restrict ourselves to that case. In that case, every $g\in G$ has a unique \emph{successor}, the vertex on the other end of the principal outgoing edge at $g$. We write $\pi\colon G\to G$ this \emph{successor} map. (If $\beta$ were bounded, e.g.\ $\beta(g)=\|g-g_0\|$, this map would be undefined at $g=g_0$.)

\begin{lem}\label{lem:picontraction}
For every $n\in\Z$ the restriction $\pi_n\colon\Gamma_n\to\Gamma_{n-1}$ of $\pi$ defines a graph contraction, namely a map from vertices to vertices which sends every edge $\{g,h\}$ either to an edge or to a single point.
\end{lem}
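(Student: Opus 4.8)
The plan is to unwind the definitions of the successor map $\pi$ and of the horosphere graphs. Fix $n\in\Z$ and an edge $\{g,h\}$ of $\Gamma_n$; by the definition of $\Gamma_n=\Gamma_{\beta-n}$ there are geodesic rays $r,s\colon\N\to G$ with $r(0)=g$, $s(0)=h$ that stay within distance $2\delta$ of one another. I would first record that $\pi$ carries vertices of $\Gamma_n$ to vertices of $\Gamma_{n-1}$: by the defining property of the horofunction $\beta(\pi(g))=\beta(g)-1$, so $\pi(g)$ lies on the horosphere underlying $\Gamma_{n-1}$, and one checks that $\pi$ preserves the property of having an infinite cone. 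So $\pi(g)$ and $\pi(h)$ are vertices of $\Gamma_{n-1}$, and — recalling that a graph contraction may legitimately collapse an edge to a point — what remains is to prove that $\{\pi(g),\pi(h)\}$ is an edge of $\Gamma_{n-1}$ whenever $\pi(g)\ne\pi(h)$.

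The geometric idea is to \emph{prepend the principal outgoing edges} to the fellow-travelling rays. Assuming $\pi(g)\ne\pi(h)$, set
\[r'\coloneqq(\pi(g),r(0),r(1),r(2),\dots),\qquad s'\coloneqq(\pi(h),s(0),s(1),s(2),\dots),\]
which are edge-paths whose initial step is the principal outgoing edge at $g$, respectively at $h$. I claim $r'$ and $s'$ are again geodesic rays. Indeed the rays witnessing a horosphere edge ascend the levels of $\beta$ one step at a time, i.e.\ $\beta(r(k))=\beta(r(0))+k$; the prepended step from $\pi(g)$ to $g$ also raises $\beta$ by $1$; and since $\beta$ is $1$-Lipschitz, any edge-path along which $\beta$ increases by $1$ at each step realises the distance between its endpoints, hence is geodesic. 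So $r'$ is a geodesic ray based at $\pi(g)$ and $s'$ one based at $\pi(h)$. They still fellow-travel: for $k\ge1$ we have $\|r'(k)-s'(k)\|=\|r(k-1)-s(k-1)\|\le2\delta$, and for $k=0$,
\[\|r'(0)-s'(0)\|=\|\pi(g)-\pi(h)\|\le\|\pi(g)-g\|+\|g-h\|+\|h-\pi(h)\|\le 2+2\delta.\]
Thus $\pi(g)$ and $\pi(h)$ are joined by geodesic rays remaining at bounded distance, so $\{\pi(g),\pi(h)\}$ is an edge of $\Gamma_{n-1}$; this proves that $\pi_n$ sends edges of $\Gamma_n$ to edges or to vertices of $\Gamma_{n-1}$.

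The step I expect to be fussiest is the assertion, used above, that the geodesic rays witnessing a horosphere edge may be chosen to ascend the $\beta$-levels one unit at a time (equivalently, so that each of their edges is reversed by an arrow), and the companion remark that $\pi$ does not destroy infiniteness of cones. For the bounded horofunctions $\beta(g)=\|g\|-m$ both are automatic — there $\beta$-monotonicity of a ray $r$ is exactly the condition $\|r(k)\|=\|r(0)\|+k$ from the definition of a geodesic ray — and for a general unbounded $\beta$ one writes it as a pointwise limit of such horofunctions and verifies that the horosphere graphs, the successor maps, and their edges all pass to the limit, using the same limiting mechanism as in the proof of Lemma~\ref{lem:connected}. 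The geometric content of the lemma, however, is entirely contained in the ``prepend the outgoing edge'' construction; everything else is bookkeeping.
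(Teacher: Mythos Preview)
Your proof is correct and follows exactly the idea of the paper's own argument: prepending the principal outgoing edge to each of the two fellow-travelling rays yields fellow-travelling geodesic rays based at the successors. The paper's proof is in fact a single sentence to this effect and omits the bookkeeping (well-definedness on vertices, preservation of infinite cones, the unbounded-horofunction case) that you carefully spell out.
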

\begin{proof}
  If there are rays starting at $g,h$ and at bounded distance from each other, then there are also rays starting at their successors and at bounded distance from each other.
\end{proof}

Since we shall compose the maps $\pi_n$, we introduce the convenient notation $\pi_n^m$ for the map $\pi_{n-m-1}\circ\cdots\circ\pi_n\colon\Gamma_n\to\Gamma_{n-m}$. We show that the maps $\pi_n$ actually contract substantially the graph metric of $\Gamma_n$, i.e.\ that they form a \emph{hyperbolic graph system} in the terminology of~\cite{kopra-salo:spds}*{\S5}:
\begin{lem}\label{lem:contraction}
  There is a constant $L\in\N$ such that
  \[\|\pi_n^L(g)-\pi_n^L(h)\|\le\lceil\|g-h\|/2\rceil\]
  for all $g,h\in\Gamma_n$.
\end{lem}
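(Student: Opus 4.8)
The plan is to play off two facts against each other: each $\pi_n$ is a graph contraction (Lemma~\ref{lem:picontraction}), so no iterate of $\pi_n$ ever \emph{increases} distance, whereas the successor flow pushes all of $G$ towards the single boundary point $\omega$ attached to $\beta$ — the common endpoint of the rays $m\mapsto\pi^m(g)$ — so by thinness of triangles it \emph{contracts} horospheres geometrically. Throughout, $\|x-y\|$ for $x,y$ in a horosphere graph denotes the distance in that graph and $\delta$ is a hyperbolicity constant of $\Gamma(G,S)$. First I would reduce to a local statement: it is enough to find $L\in\N$ such that $\pi_n^L$ sends any two vertices of $\Gamma_n$ at distance $\le2$ to vertices of $\Gamma_{n-L}$ that are equal or adjacent. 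Granting this, given $g,h\in\Gamma_n$ at distance $d$, pick a geodesic $g=v_0,v_1,\dots,v_d=h$ in $\Gamma_n$; since $v_{2i}$ and $v_{2i+2}$ are at distance $\le2$, the vertices $\pi_n^L(v_0),\pi_n^L(v_2),\pi_n^L(v_4),\dots$ form a walk in $\Gamma_{n-L}$ with consecutive vertices equal or adjacent, and appending (when $d$ is odd) the step $\pi_n^L(v_{d-1})\to\pi_n^L(v_d)$, which is licit because $\pi_n^L$ is a contraction, exhibits a walk of length $\le\lceil d/2\rceil$ from $\pi_n^L(g)$ to $\pi_n^L(h)$.

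Next I would transfer the local statement to the Cayley graph. Two vertices of $\Gamma_n$ at distance $\le2$ lie at distance $\le4\delta$ in $\Gamma(G,S)$, by the definition of horosphere edges together with Lemma~\ref{lem:raysfellowtravel}. Conversely, two vertices $g',h'$ of $\Gamma_{n-L}$ are equal or joined by an edge as soon as the successor rays $m\mapsto\pi^m(g')$ and $m\mapsto\pi^m(h')$ — which run inside infinite cones, hence are admissible witnesses — stay at distance $\le2\delta$ for all $m$. So the lemma reduces to the following geometric claim: there is $L=L(\delta)$ such that whenever $g,h$ lie on a single horosphere, have infinite cones, and satisfy $\|g-h\|_{\Gamma(G,S)}\le4\delta$, one has $\|\pi^m(g)-\pi^m(h)\|_{\Gamma(G,S)}\le2\delta$ for all $m\ge L$.

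To prove this, write $\alpha(m)=\pi^m(g)$ and $\alpha'(m)=\pi^m(h)$: these are geodesic rays with common endpoint $\omega$, along each of which $\beta$ decreases by exactly $1$ per step. Look at the ideal triangle on $g$, $h$, $\omega$: its finite side $[g,h]$ has length $\le4\delta$, so by thinness it stays within $\delta$ of $\alpha\cup\alpha'$; for $m$ past an explicit multiple of $\delta$ the point $\alpha(m)$ is too far from the bounded set $[g,h]$ to be within $\delta$ of it, hence lies within $\delta$ of some $\alpha'(m')$, and since Busemann functions are $1$-Lipschitz and $\beta(g)=\beta(h)$ this forces $|m-m'|\le\delta$ and therefore $\|\alpha(m)-\alpha'(m)\|\le2\delta$. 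This is exactly the argument proving Lemma~\ref{lem:raysfellowtravel}, now carried out along the geodesic flow towards $\omega$ rather than away from the basepoint, with word length replaced by $\beta$; taking $L$ to be that multiple of $\delta$ finishes the proof.

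The part I expect to be delicate is the last step: extracting a \emph{single}, $\delta$-controlled bound $L=L(\delta)$ on how many successor steps are needed to bring the two orbits within $2\delta$, while keeping careful track of the three gradings that are a priori distinct for an unbounded Busemann function — word length, the horofunction $\beta$, and the arc-length index along $\alpha$ and $\alpha'$. Everything else (the path-chopping of Step 1, the contraction property, and the fact that $\pi$ keeps cones infinite so that the orbits really are edge-witnesses) is routine.
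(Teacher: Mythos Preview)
Your reduction in the first paragraph (halving a path of length $d$ to $\lceil d/2\rceil$ steps once you know distance-$2$ pairs collapse to distance $\le 1$) is correct and matches the paper's opening move. The translation in the second paragraph (horosphere-distance $\le2$ implies Cayley-distance $\le4\delta$) is also fine.

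The gap is in your third step. You assert that $g',h'\in\Gamma_{n-L}$ are adjacent ``as soon as the successor rays $m\mapsto\pi^m(g')$ and $m\mapsto\pi^m(h')$\dots stay at distance $\le2\delta$'', calling these rays ``admissible witnesses''. But they are not: horosphere adjacency is defined by geodesic rays going \emph{up} (into the cone, increasing $\beta$), whereas the successor map $\pi$ decreases $\beta$ and heads toward $\omega$. Every pair of successor rays converges to the same point $\omega$, so eventual $2\delta$-closeness of successor rays carries essentially no information; requiring it from $m=0$ on just says $\|g'-h'\|_{\Gamma(G,S)}\le2\delta$. Your Steps~4--5 therefore establish only that $\pi^L(g)$ and $\pi^L(h)$ are $2\delta$-close in the Cayley graph, and this does \emph{not} by itself force a horosphere edge: in a tree with an unbounded Busemann function, distinct points on a horosphere can be arbitrarily Cayley-close while their up-cones are disjoint, so no witnessing rays exist. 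One-endedness rules out trees, but you would still need an argument that Cayley-closeness of two infinite-cone points forces overlapping cone limit sets, and you have not supplied one.

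The paper closes exactly this gap with a different mechanism: rather than a thin-triangle estimate on successor rays, it runs a pigeonhole on the finitely many cone types along the descending sequences $(g_{n-j}),(k_{n-j})$. A repeat of the pair of cone types (together with the bounded relative position) lets one periodically extend the finite up-segment from level $n-j$ to level $n-i$ into genuine infinite rays emanating from $g_{n-i},k_{n-i}$ that stay $2\delta$-close---these are the required witnesses for the horosphere edge. The bound on $L$ is then the number of such cone-type pairs, not a multiple of $\delta$.
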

\begin{proof}
  Consider two adjacent horizontal edges $\{g_n,h_n\}$ and $\{h_n,k_n\}$ in $\Gamma_n$, and lift them to edges $\{g_m,h_m\}$ and $\{h_m,k_m\}$ for all $m<n$. If these are all genuine edges, then $(g_m)_{m\le n}$ and $(k_m)_{m\le n}$ are arbitrarily long geodesic segments at bounded distance from each other, and this distance is at most $2\delta$ by Lemma~\ref{lem:raysfellowtravel}; so there exists geodesic segments $(g_m)_{m\in[n-j,n-i]}$ and $(k_m)_{m\in[n-j,n-i]}$ for some $0\le i<j\le n$ with same cone types at the beginning and at the end; so there exists geodesic rays starting at $g_{n-i},k_{n-i}$ and at bounded distance from each other; so $\{g_{n-i},k_{n-i}\}$ is an edge of $\Gamma_{n-i}$. Furthermore, $i$ is bounded by the number of pairs of cone types, say $L$.
\end{proof}
  
Horospheres in hyperbolic space $\mathbb H^d$ have the geometry of $(d-1)$-dimensional euclidean spheres; so if $\Gamma$ is the fundamental group of a $d$-dimensional compact hyperbolic manifold then then its horosphere graphs are finite or quasi-isometric to $\Z^{d-1}$. This is the motivation for the following key result; see analogous statements in~\cite{franks:anosov}*{Theorem~8.3} and~\cite{gromov:nilpotent}*{\S2}:
\begin{lem}\label{lem:polygrowth}
  The horosphere graphs have uniformly polynomial growth; namely, there exist constants $C,d\in\N$ such that a ball of radius $R$ in a horosphere graph contains at most $C\cdot R^d$ vertices.
\end{lem}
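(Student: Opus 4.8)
The plan is to bound the number of vertices in a ball $B_{\Gamma_n}(g,R)$ of a horosphere graph using the contraction estimate of Lemma~\ref{lem:contraction} together with the fact that the word metric balls in $G$ grow at most exponentially, say $\#B_{G,S}(1,R)\le\lambda^R$ for some $\lambda$ depending only on $|S|$. The key idea is that $\pi_n^L$ roughly halves distances in the horosphere graph, so iterating it about $\log_2 R$ times brings any ball of radius $R$ down to a ball of bounded radius; conversely, a vertex $h$ at horosphere-distance $\le R$ from $g$ has all its iterated successors staying uniformly close to the corresponding successors of $g$, and the number of vertices with a prescribed sequence of successors is controlled.

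\medskip

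First I would record the ``fibre bound'': for a vertical edge pointing from $h'$ to $h$ in $\Gamma(G,S)$, there are at most $\#S$ vertices $h'$ with given successor $h$, hence at most $(\#S)^m$ vertices $g'\in\Gamma_n$ with $\pi_n^m(g')=\pi_n^m(g)$ for a fixed $g$. More usefully, if $\|\pi_n^m(g')-\pi_n^m(g)\|\le r$ in $\Gamma_{n-m}$, then $g'$ lies in the union of these fibres over the $\#B_{\Gamma_{n-m}}(\pi_n^m(g),r)$ many targets, so $\#B$-type estimates ``upstairs'' follow from $\#B$-type estimates ``downstairs'' with an extra factor $(\#S)^m$. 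Next, fix $R$ and let $m=L\lceil\log_2 R\rceil$, so that by Lemma~\ref{lem:contraction} (applied $\lceil\log_2 R\rceil$ times) any $h\in B_{\Gamma_n}(g,R)$ satisfies $\|\pi_n^m(h)-\pi_n^m(g)\|\le\lceil R/2^{\lceil\log_2 R\rceil}\rceil\le 1$ in $\Gamma_{n-m}$. Applying the fibre bound with this $m$ and $r=1$, together with the trivial bound $\#B_{\Gamma_{n-m}}(\pi_n^m(g),1)\le 1+2\delta\cdot(\#S)^{2\delta}$ or simply the crude $\#B_{G,S}(\cdot,C_0)$ for a constant $C_0$ bounding the word-length of a horizontal edge, gives
\[
\#B_{\Gamma_n}(g,R)\le (\#S)^{m}\cdot C_1 = (\#S)^{L\lceil\log_2 R\rceil}\cdot C_1 \le C_1\cdot(\#S)^{L}\cdot R^{L\log_2(\#S)},
\]
so the bound $C\cdot R^d$ holds with $d=\lceil L\log_2(\#S)\rceil$ and $C=C_1(\#S)^L$, uniformly in $n$ and in the choice of horofunction $\beta$ (all constants depend only on $G,S,\delta$).

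\medskip

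The main obstacle is the direction of the inequality in the fibre/contraction argument: Lemma~\ref{lem:contraction} controls how $\pi_n^L$ moves points that are \emph{already known} to be in $\Gamma_n$, but to bound $\#B_{\Gamma_n}(g,R)$ I must be sure that every $h$ with $\|\pi_n^m(h)-\pi_n^m(g)\|\le 1$ and the prescribed vertical history actually contributes, i.e.\ that distinct $h$ give distinct pairs (history, target) — this is where injectivity of $h\mapsto(\text{its successors},\pi_n^m(h))$ on each fibre, hence the $(\#S)^m$ count, is used, and one must check that a horizontal edge of $\Gamma_n$ between $g$ and $h$ does indeed project under $\pi_n^m$ to a vertex of $\Gamma_{n-m}$ at distance $\le1$ (which is exactly Lemma~\ref{lem:picontraction} iterated, plus Lemma~\ref{lem:contraction} for the distance decay). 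A secondary point requiring care is uniformity over all horofunctions $\beta$, including the unbounded Busemann ones obtained as limits: since the constants $\delta$, $L$, $\#S$ and $C_0$ are intrinsic to $(G,S)$ and survive passage to pointwise limits of edge orientations, the bound is genuinely uniform, which is what the statement demands.
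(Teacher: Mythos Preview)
Your argument is correct and is essentially the same as the paper's: both use that $\pi_n$ is at most $\#S$-to-$1$, iterate Lemma~\ref{lem:contraction} roughly $\log_2 R$ times to push a radius-$R$ ball down to a radius-$1$ ball, and bound the latter using that horizontal edges have word-length $\le 2\delta$, yielding the exponent $d=L\log_2(\#S)$. The only cosmetic difference is that the paper phrases the iteration as a recursion $N_k\le D\,N_{k-1}$ on maximal ball sizes at radius $2^k$, whereas you push all the way down in one step and count fibres; the constants and exponent agree.
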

\begin{proof}
  We consider more closely the contractions $\pi_n$. The number of vertical edges outgoing at every vertex is at most $\#S$; so $\pi_n$ is an at most $\#S$-to-$1$ map.

  Consider now a ball of radius $2^k$ in $\Gamma_n$. By Lemma~\ref{lem:contraction}, it maps under $\pi_n^L$ into a ball of radius $2^{k-1}$ in $\Gamma_{n-L}$, and this map is at most $D$-to-$1$ for $D\coloneqq(\#S)^L$. A ball of radius $1$ in $\Gamma_n$, on the other hand, contains at most $C\coloneqq(\#S)^{2\delta}$ elements, since horizontal edges have length at most $2\delta$ in the metric of $\Gamma(G,S)$. Letting $N_k$ denote the maximal size of a ball of radius $2^k$, we obtain $N_k\le D N_{k-1}$ so $N_k\le C\cdot D^k$ and thus the size of a ball of radius $R$ is at most $C\cdot(2R)^{\log_2D}$, a uniform polynomial upper bound.
\end{proof}

\begin{lem}\label{lem:infinite}
  If $\beta$ is unbounded, then the horosphere graphs are infinite.
\end{lem}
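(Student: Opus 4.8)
The plan is to argue by contradiction: suppose some horosphere graph $\Gamma_{n_0}$ is finite, and deduce that $G$ has at least two ends, contradicting the reduction to a $1$-ended group made in \S\ref{ss:reductions}. Throughout I will use that, since $\beta$ is unbounded, the successor map $\pi$ is defined at every vertex and every $\Gamma_n$ is non-empty; in particular every level set $\beta^{-1}(n)$ is non-empty, so $\beta(G)=\Z$. I will also use that $\beta$ varies by at most $1$ along every edge of $\Gamma(G,S)$, and that $C(v)\subseteq\{g:\beta(g)\ge\beta(v)\}$ is a connected subgraph for every $v$, each of its vertices being joined to $v$ by a directed path that stays inside $C(v)$.

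The first step is to propagate finiteness: if one $\Gamma_{n_0}$ is finite then all $\Gamma_n$ are finite. The successor restricts to a map $\pi\colon\Gamma_n\to\Gamma_{n-1}$, since $g$ is an in-neighbour of $\pi(g)$ and hence $C(\pi(g))\supseteq C(g)$ is infinite whenever $C(g)$ is; this map is at most $(\#S)$-to-one because $\pi^{-1}(v)$ is contained in the set of neighbours of $v$, so $\#\Gamma_n\le\#S\cdot\#\Gamma_{n-1}$. Conversely, writing $C(v)=\{v\}\cup\bigcup_g C(g)$ over the in-neighbours $g$ of $v$ shows that every $v\in\Gamma_{n-1}$ has an in-neighbour lying in $\Gamma_n$; as each vertex of $\Gamma_n$ has at most $\#S$ out-neighbours, $\#\Gamma_{n-1}\le\#S\cdot\#\Gamma_n$. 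Iterating these two bounds carries finiteness from level $n_0$ to all levels.

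The second step upgrades this to finiteness of every level set $\beta^{-1}(n)$. Since there are only finitely many cone types (\S\ref{ss:cones}), and a cone type \emph{is} a cone up to translation, the finite (``mortal'') cones have uniformly bounded size $K$. Given a mortal $g\in\beta^{-1}(n)$, the iterates $C(g)\subseteq C(\pi(g))\subseteq C(\pi^2(g))\subseteq\cdots$ increase strictly as long as they remain finite --- each $C(\pi^{j}(g))$ contains $\pi^j(g)$, which lies one horosphere lower than $\pi^{j-1}(g)$ and so is not in $C(\pi^{j-1}(g))$ --- hence some $\pi^{j}(g)$ with $j\le K$ has infinite cone, i.e.\ lies in $\Gamma_{n-j}$. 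Therefore $\beta^{-1}(n)$ is contained in the $K$-neighbourhood, inside $\Gamma(G,S)$, of $\Gamma_n\cup\Gamma_{n-1}\cup\cdots\cup\Gamma_{n-K}$, a finite set by the previous step together with local finiteness.

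The last step produces the two ends. Fix $K'\ge 0$ and set $F=\beta^{-1}(\{-K',\dots,K'\})$, which is finite by the second step. Since $\beta$ changes by at most $1$ across each edge, $G\setminus F$ is the disjoint union of $A_+=\{\beta>K'\}$ and $A_-=\{\beta<-K'\}$ with no edge between them. Choosing $v\in\Gamma_{K'+1}$, the cone $C(v)$ is infinite, connected and contained in $A_+$, so $A_+$ has an infinite component; choosing any $w$ with $\beta(w)<-K'$, the successor chain $w,\pi(w),\pi^2(w),\dots$ is an infinite connected subset of $A_-$, so $A_-$ has one as well. Hence $G\setminus F$ has at least two infinite components and $G$ has at least two ends --- the sought contradiction. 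The delicate point, which I expect to be the main obstacle to write out carefully, is the first step: $\pi$ need not be surjective onto $\Gamma_{n-1}$ (a principal outgoing edge may point to a vertex other than the live in-neighbour one has in hand), so the upper bound on $\#\Gamma_{n-1}$ must be extracted from the cone decomposition rather than from a section of $\pi$; a secondary point is confirming that finiteness of the set of cone types genuinely bounds the size of mortal cones.
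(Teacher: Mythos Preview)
Your argument is correct and follows the same idea as the paper's proof: a finite horosphere would separate $G$ into two infinite pieces, contradicting $1$-endedness. The paper's proof is the single line ``$\Gamma(G,S)\setminus\Gamma_\beta$ has at least two ends, both infinite if $\beta$ is unbounded'', which is really a sketch of your step~3. Your steps~1 and~2 make explicit two points the paper glosses over --- that finiteness propagates between levels, and that the mortal vertices at each level form only a bounded-thickness fringe around the immortal ones (so $\beta^{-1}(n)$, not just $\Gamma_n$, is finite). These steps are genuinely needed to turn the one-liner into a proof, since $\Gamma_\beta$ is a priori a proper subset of $\beta^{-1}(0)$, and your treatment of them is sound; in particular your counting argument for $\#\Gamma_{n-1}\le\#S\cdot\#\Gamma_n$ correctly avoids assuming surjectivity of $\pi$, and the bound on mortal-cone size via finiteness of cone types is legitimate.
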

\begin{proof}
  $\Gamma(G,S)\setminus\Gamma_\beta$ has at least two ends, both infinite if $\beta$ is unbounded.
\end{proof}

\section{Proof of Theorem~\ref{thm:main}}\label{ss:proof}
Let $G$ be a non-virtually-free word hyperbolic group. Using the reductions from~\S\ref{ss:reductions}, we may assume that $G$ is $1$-ended and has only even-length relations. Thanks to Theorem~\ref{thm:criterion}, it is enough to prove that $G$ simulates a $2$-regular tower of infinite, amenable, connected, bounded-degree graphs.

We make use of a subshift of finite type developed by Cohen, Goodman-Strauss and Rieck~\cite{cohen-goodman-rieck:hyperbolic}*{\S8}. Their objective is to construct a non-empty subshift of finite type on $G$ on which $G$ acts with trivial stabilizers; but what they really produce is a subshift of finite type $X_{\Theta'}$ all of whose configurations determine a ``populated shelling''. We extract the relevant information as follows:
\begin{thm}[\cite{cohen-goodman-rieck:hyperbolic}*{Theorem~8.12}]\label{thm:cgr}
  There exists a non-empty $G$-subshift of finite type $X$ each of whose configurations determine:
  \begin{enumerate}
  \item an unbounded horofunction $\beta\colon G\to\Z$, unique up to addition of a constant;
  \item a bounded-degree graph $\Delta$ with ``vertical'' and ``horizontal'' edges: there is a ``horofunction'' $\beta'\colon\Delta\to\Z$ such that every edge $(v,w)$ in $\Delta$ satisfies $\beta'(v)=\beta'(w)+1$ (and then the edge is directed and called vertical) or $\beta'(v)=\beta(w)$ (and then the edge is undirected and called horizontal); the horizontal subgraphs $\Delta_n\coloneqq(\beta')^{-1}(n)$ are all infinite and connected;
  \item following vertical edges, a $2:1$ graph contraction $\pi\colon\Delta\righttoleftarrow$ with $\beta'(\pi(v))=\beta'(v)-1$ for all $v$; thus and $\pi$ restricts to contractions $\Delta_n\to\Delta_{n-1}$;
  \item a quasi-isometry $\phi\colon\Delta\to G$ and a non-decreasing quasi-isometry $\psi\colon\Z\to\Z$ such that $\beta\circ\phi=\psi\circ\beta'$; so there is a constant $C\in\N$ such that the graph metrics in $\Delta$ and $\Gamma(G,S)$ are related by
    \[C^{-1}\|x-y\|-C\le|\phi(x)-\phi(y)\|\le C\|x-y\|+C\]
    and every point in $G$ is at distance at most $C$ from $\phi(\Delta)$.
  \end{enumerate}
  In particular, the horizontal subgraph $\Delta_n$ is quasi-isometric to the horosphere graph $\Gamma_{\beta-\psi(n)}$, with a quasi-isometry constant independent of $n$.
\end{thm}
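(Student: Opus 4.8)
The plan is to derive the statement from the construction of Cohen, Goodman-Strau\ss\ and Rieck~\cite{cohen-goodman-rieck:hyperbolic}*{\S8}, translating their notion of a ``populated shelling'' into the tower-of-graphs language used above. First I would recall the combinatorial input that makes such a subshift of finite type possible at all: by~\S\ref{ss:cones} the Cayley graph $\Gamma(G,S)$ has finitely many cone types, the local data $N_g\in(\Z\cap[-2\delta,2\delta])^B$ determines the cone type of $g$ and of its neighbours, and the successor map $\pi$ together with the spanning tree $\Upsilon$ is produced by a regular grammar. Consequently a finite set of local rules can force a colouring of $G$ to encode a globally coherent edge orientation, i.e.\ a horofunction $\beta$, unique up to an additive constant; unboundedness of $\beta$ comes essentially for free, since the rules preclude a vertex realising the minimum of a bounded horofunction (equivalently $\beta$ arises as a limit along a geodesic ray of the functions $g\mapsto\|r(n)-g\|-n$, which is unbounded). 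This yields item~(1).

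For items~(2) and~(3) the obstacle to overcome is that the restriction $\pi_n\colon\Gamma_n\to\Gamma_{n-1}$ of the successor map is only \emph{at most $\#S$-to-$1$} (see the proof of Lemma~\ref{lem:polygrowth}), whereas an exact $2$-to-$1$ contraction is wanted. I would follow CGR in ``populating'' the horospheres: the subshift records, by further local data, a binary branching pattern at each vertex which groups the genuine successors and pads the missing slots with \emph{phantom} vertices that transmit the binary structure downward without corresponding to points of $G$. This produces a bounded-degree graph $\Delta$ with a horofunction $\beta'$, horizontal slices $\Delta_n\coloneqq(\beta')^{-1}(n)$, and an honestly $2$-to-$1$ contraction $\pi\colon\Delta\to\Delta$ lowering $\beta'$ by one; the horizontal edges are declared so that, after deleting the phantom vertices, one recovers the horosphere graph $\Gamma_{\beta-\psi(n)}$ up to bounded error, $\psi\colon\Z\to\Z$ being the non-decreasing quasi-isometry recording how many $G$-levels one descends per halving of population. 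The map $\phi\colon\Delta\to G$ sends each $\Delta$-vertex to the $G$-vertex it represents (a phantom vertex to a nearby genuine one); that $\phi$ is a quasi-isometry with $\beta\circ\phi=\psi\circ\beta'$ is built into the construction, with constants uniform in $n$ because the whole construction has bounded geometry, governed only by $\#S$ and $\delta$.

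It then remains to read off the qualitative properties. Bounded degree is immediate, the vertical out-degree being at most $\#S$ and the horizontal degree bounded because horizontal edges join vertices at $\Gamma(G,S)$-distance $\le2\delta$. The slices $\Delta_n$ are infinite by Lemma~\ref{lem:infinite} transported along $\phi$, and connected because $G$ is $1$-ended: here I would invoke Lemma~\ref{lem:connected}, or rather its proof, which bounds the length of a connecting path inside a horosphere graph in terms of the $\Gamma(G,S)$-distance of its endpoints, a property stable under the quasi-isometry $\phi$. That $X$ is a subshift of finite type is the assertion that all of this data --- cone types, the horofunction, the branching pattern, the phantom bookkeeping, the edge declarations --- is subject only to locally checkable constraints, which holds because cone types are finite and every rule refers to at most the ball $B$ of radius $2\delta$.

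I expect the genuinely hard part --- and the part that is the real content of~\cite{cohen-goodman-rieck:hyperbolic} --- to be making the ``populating'' padding simultaneously \emph{local}, so that the resulting object is a subshift of finite type, and \emph{non-degenerate}, so that the phantom vertices neither disconnect the slices $\Delta_n$ nor collapse them, and so that each $\Delta_n$ stays quasi-isometric to the honest horosphere graph. The $1$-endedness of $G$, through Lemma~\ref{lem:connected}, is precisely what guarantees connectedness of the slices; once that is secured, everything else is bounded-geometry bookkeeping.
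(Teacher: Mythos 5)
Your proposal follows essentially the same route as the paper: both treat the theorem as a translation of the ``populated shelling'' of \cite{cohen-goodman-rieck:hyperbolic}*{\S8} into the tower language, both note that the underlying subshift already encodes cone types and hence the horosphere graphs, and both obtain infiniteness and connectedness of the slices from Lemma~\ref{lem:infinite} and Lemma~\ref{lem:connected}. The identification of the data (horofunction, vertical parent--child matching, $\phi$, $\psi$) is the same in spirit as the paper's identification of CGR's ``population growth'', ``population'' $\wp\in\{1,\dots,N\}$ and ``parent-child matching''.

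The one place where you diverge, and where your sketch is genuinely weaker, is the passage from the successor map (merely at most $\#S$-to-$1$) to an \emph{exactly} $2$-to-$1$ contraction. You propose to pad fibres with ``phantom'' vertices inside each slice. Taken literally this is dangerous: if every vertex is forced to have exactly two preimages while the genuine horospheres grow per level at the rate $\lambda$ of $G$ (not $2$), the proportion of phantoms is not automatically bounded, and the slices $\Delta_n$ could acquire binary-tree-like phantom regions; then $\phi$ would not be a quasi-isometry, $\Delta_n$ would not be quasi-isometric to $\Gamma_{\beta-\psi(n)}$, and --- crucially for the application --- the polynomial growth/amenability furnished by Lemma~\ref{lem:polygrowth} would be lost. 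What makes the count work in CGR is that the multiplicity over each $G$-vertex (the population) stays bounded by $N$, and the matching of the exact branching rate against $\lambda$ is absorbed into the \emph{vertical} rescaling $\psi$, i.e.\ into how many $G$-horospheres one $\Delta$-level spans. The paper's proof also has to do a small amount of genuine work here that your proposal does not mention: in CGR the branching is $q^\Delta$ for $q\in\{2,3\}$ subject to the hypothesis $\log q/\log\lambda\notin\Q$, and the paper observes that one may nevertheless take $q=2$ because that irrationality is used only in their Corollary~9.4 (aperiodicity, irrelevant here), inserting intermediate horospheres so that the vertical branching becomes precisely $2$. You do gesture at $\psi$ ``recording how many $G$-levels one descends'', and you correctly flag that the hard part lives in \cite{cohen-goodman-rieck:hyperbolic}; but as written, the bounded-multiplicity issue is the concrete point your argument would need to settle rather than defer.
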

\begin{proof}
  The ``populated shelling'' contains a ``population growth'' value $\Delta$, taking values in a $2$-element set; these values are constant on horospheres of $G$, and determine the sizes of the fibres of $\psi$. There is also a ``population '' $\wp\in\{1,\dots,N\}$, determining the fibres of $\phi$. There is finally a ``parent-child matching'' determining the vertical edges of $\widehat\Gamma$. Note that in their construction every vertex has $q^\Delta$ predecessors for some $q\in\{2,3\}$ rather than $2$ as in our $\Delta$; but we may always choose $q=2$, since the only place the assumption `$\log q/\log\lambda\notin\Q$' is used is in~\cite{cohen-goodman-rieck:hyperbolic}*{Corollary~9.4}, and we may insert, using a finite amount of data, $\Delta$ intermediate horospheres so as to make the vertical branching be precisely $2$.

  Furthermore, the subshift $\Sigma$ that they construct is built on top of a subshift $\Omega_S$ that encodes the cone types of $G$, and in particular ``knows'' the graphs $\Gamma_{\beta-n}$ for all $n$. They are infinite by Lemma~\ref{lem:infinite}, and connected by Lemma~\ref{lem:connected}.
\end{proof}

The tower of graphs associated with a configuration in $X$ is precisely $(\Delta_n)_{n\in\N}$, with maps $\pi_n\colon\Delta_n\to\Delta_{n-1}$ given by the parent-child matching. The only property that remains to prove is amenability of the graphs $\Delta_n$, and it follows from Lemma~\ref{lem:polygrowth}.

\begin{bibsection}
  \begin{biblist}
    \bibselect{math}
  \end{biblist}
\end{bibsection}

\end{document}